\font\twelvemsb=msbm10 at 12pt
\newtheorem{theorem}{Theorem}[section]
\newtheorem{corollary}[theorem]{Corollary}
\newtheorem{lemma}[theorem]{Lemma}
\newtheorem{proposition}[theorem]{Proposition}
\theoremstyle{definition}
\newtheorem{remark}[theorem]{Remark}
\newtheorem{assumption}[theorem]{Assumption}
\newtheorem{exmp}{Example}[section]
\newcommand{\LL}{\mathcal{L}}
\newcommand{\V}{\mathcal{V}}
\newcommand{\RR}{\mathbb{R}}
\newcommand{\NN}{\mathbb{N}}
\newcommand{\EE}{\mathbb{E}}
\begin{document}

\title{Decaying derivative estimates for functions of solutions to non-autonomous SDEs}

\author{\textbf{Maria Lefter}\thanks{School of Mathematics, University of Edinburgh}\\ \href{mailto:s1776026@ed.ac.ukm}{s1776026@ed.ac.uk} 
\and \textbf{ David \v{S}i\v{s}ka}\footnotemark[1] \\   \href{mailto:d.siska@ed.ac.uk}{d.siska@ed.ac.uk}
\and  \textbf{\L{}ukasz Szpruch}\footnotemark[1] \,\,\thanks{Alan Turing Institute} \\  \href{mailto:l.szpruch@ed.ac.uk}{l.szpruch@ed.ac.uk}}
\maketitle

\begin{abstract}

We produce uniform and decaying bounds in time for derivatives of the solution to the  backwards Kolmogorov equation associated to a stochastic processes governed by a time dependent dynamics. These hold under assumptions over the integrability properties in finite time of the derivatives of the transition density associated to the process, together with the assumption of remaining close over all $[0,\infty)$, or decaying in time, to some static measure. We moreover provide examples which satisfy such a set of assumptions. Finally, the results are interpreted in the McKean--Vlasov context for monotonic coefficients by introducing an auxiliary non-autonomous stochastic process.  

\end{abstract}

\section{Introduction}

In this paper we consider the following real--valued, $d$-dimensional stochastic process $(X_{t})_{t\ge 0}$ satisfying a non--autonomous SDE. Indeed, given time--dependent coefficients $b:[0,\infty)\times \RR^d  \to \RR^d $ and $\sigma:[0,\infty)\times \RR^d  \to \RR^d \times \RR^d$ and initial datum $x\in \RR^d$,  our object of study is the stochastic process $(X_s^{x})_{s\ge 0}$ assumed to be the unique (in the sense of probability law) weak solution of the following SDE with Brownian motion process $(B_s)_{s\ge0}$:
\begin{equation}\label{original_intro}
dX_s^{x}=b(s,X_s^{x})ds +\sigma(s,X_s^{x})dB_s,\quad \forall s \in(0,\infty); \quad X_0^{x}=x.
\end{equation}
Let $\phi:\RR^d \mapsto \RR$ be a measurable function. Then, due to uniqueness of solution to \eqref{original_intro},  $[0,\infty)\times \RR^d\ni (s,x)\mapsto V(s,x):=\EE[\phi(X_s^{0,x})]$ is a well defined function under mild conditions on $\phi$ and the coefficients $b$ and  $\sigma$. 
Moreover it satisfies, under enough assumptions for the coefficients, a certain PDE known as the backward Kolmogorov equation  (see e.g.~\cite[Theorem 7.6]{karatzas1998brownian} or \cite{krylov1999kolmogorov}).

For only space dependent, smooth coefficients $b, \sigma$ with bounded derivatives of any order, in addition of $\sigma$ being bounded itself; and smooth function $\phi$, the authors of  \cite{talay1990second} were motivated by uniform weak error for  Euler scheme estimates to obtain exponential decay for derivatives of $V$ of any order. For strictly space dependent,  bounded and H\"older continuous coefficients with derivatives up to certain order, we have the results of \cite{pardoux2003poisson} which are heavily based on previously obtained rate of convergence to invariant measures in \cite{veretennikov1997polynomial}. More recent work was done by Menozi, Pesce and Zhang \cite{menozzi2020density} for uniform bounds for $DV(s,x)$ and $D^2V(s,x)$, where the coefficients are assumed  H\"older continuous in space and the drift has linear growth.

The novelty of our results is that we translate finite time results to infinite time. More precisely, 
we obtain explicit estimates in the time interval $[0,\infty)$ of the space derivatives of $V$ up to the order imposed by the regularity and integrability  of the transition density of the SDE (\ref{original_intro}) in a fixed finite time interval. The order of decay is given by what looks like convergence to an invariant measure but is in fact much weaker (see Assumption \ref{as3}).


An application is the use of such derivative estimates to obtain explicit weak error rates  when approximating a process $(X^x_s)_{s\ge0}$ by an Euler scheme. Namely, the definition of $V$ and the initial data of the PDE \eqref{original_intro}, allows us to recast the expression of the weak error into another one to which we can apply It\^o's formula. Additionally, the use of the backward Kolmogorov equation simplifies the expression result of this computation and lets us split the weak error into more approachable terms. Together with decaying in time bounds on the space derivatives of $V$, these allow obtaining uniform such weak error orders. In the case that $(X^x_s)_{s\ge0}$ follows a McKean--Vlasov dynamics (for more information see Section \ref{ap_mc_sec}), the decaying in time estimates on the space derivatives of $V$ allow obtaining explicit, uniform in time weak error order for approximating particle systems (see \cite{chassagneux2019weak,bencheikh2019weak}). 

The paper is organised as follows: in Section \ref{sec main} we formulate conditions under which we prove that the desired derivative estimates hold. They are presented in the first main result of the paper: Theorem \ref{main}. Its proof is inspired by \cite{pardoux2003poisson} in the sense that we also use Chapman--Kolmogorov identity which allows us to move the derivatives of $V$ onto the derivatives of the transition density of $X$ in finite time intervals.   
In Section 3 we then show two examples satisfying this set of assumptions in the non-autonomous SDE setup and we state the other main result, which brings down to earth the results in Section \ref{sec main} and is presented in Theorem \ref{WTV}. Finally, in Section \ref{ap_mc_sec} we interpret the result in the McKean--Vlasov SDE scenario.


\subsection{Notation}
Relevant spaces for this paper are  the space of $\RR^d$--Borel--measurable functions $B(\RR^d)$ and, for $m\in \NN$, $B_m(\RR^d)=\big\{\phi\in B(\RR^d)$ such that there exists $C>0$ and $|\phi(x)|\le C(1+|x|^m),\, \forall x\in \RR^d\big\}$. Moreover, we consider the space of $p$-continuously differentiable ($p\in \NN$) functions  on $\RR^d$ represented by $C^p(\RR^d)$; and for $p_1,p_2\ge1$,  $I\subseteq \RR_+ \equiv [0,\infty)$,  the space of $p_1,p_2$-continuously differentiable functions over $I$ and $\RR^d$ respectively denoted by  $C^{p_1,p_2}(I\times\RR^d)$. A $b$ as the subindex, i.e $C_b^{p_1,p_2}([0,T]\times\RR^d)$,  will mean that the functions and all the required derivatives are bounded.
A particularly important role is played by the space of probability measures on $\RR^d$ represented by $\mathcal{P}(\RR^d)$.  Also, we say $\mathcal{L}(\xi) \in \mathcal{P}^W(\RR^d)$ if for given $W$ a Lyapunov function (in the sense of Assumption \ref{asex3c}) we have  $\int_{\RR^d} (1+W(x))\mathcal{L}(\xi)(dx)<\infty$. On the other hand, for any function $\phi\in B(\RR^d)$, we say $\phi \in L^p(q)$ when   $\int_{\RR^d} |\phi(x)|^pq(dx)<\infty.$ The measure is only omitted when we refer to the Lebesgue measure.

With respect to distances, we denote the $m$--Wasserstein measure by  $\mathcal{W}_m$ and the Total Variation norm by $||\cdot||_{TV}$ (see \cite[pp. 436, 244]{bakry2014analysis}). Additionally, recall \cite{villani2008optimal} that for a given function $W$ taking values in $[0,\infty)$, the Weighted Total Variation (WTV) norm, $||\cdot||_W$, and WTV distance, $d_W$, are given for any two measures $q,q'\in \mathcal{P}^W(\RR^d)$ by 
\begin{equation}\label{WTV_def}
\begin{split}
||q||_W:=&\int_{\RR^d}(1+W(x))dq(x)\\
d_W(q,q'):=&\int_{\RR^d}(1+W(x))d|q-q'|(x)=2||(1+W(\cdot))(q-q')||_{TV}=||q-q'||_W.
\end{split}
\end{equation}
Moreover, in any Euclidean space  $|\cdot|$ is used as an arbitrary norm (since they are all equivalent in finite dimensions  and our estimates would only change up to a constant), only the trace norm denoted by $\hbox{tr}(\cdot)$ being mentioned independently. Moreover, for all $x,y\in \RR^d$ we use $xy$ to denote the dot product. Finally, for any matrix $M\in \RR^{d\times d}$, we denote its transpose by $M^*.$

Next, let us mention that throughout the paper $C>0$ is a constant changing value from line to line, which might be dependent on the dimension, the coefficients and the remaining parameters in the set of assumptions but crucially independent of time and space. 

Finally, let us mention that we do not distinguish in between measures and densities, i.e if $p \in \mathcal{P}(\RR^d)$ is absolutely continuous with respect to the Lebesgue measure on $\RR^d$, we also denote it's density by $p$. Moreover, if they exist, we denote the $n^{th}$-derivatives in the first space component of the density $(0,\infty)\times \RR^d\ni (s,x') \mapsto p_{s}^{0}(x,x')$  by $\partial_x^\alpha p_{s}^{0}(x,\cdot)=\partial_{x_1}^{\alpha_1}...\partial_{x_d}^{\alpha_d}p_{s}^{0}(x,\cdot)$, where $x=(x_i)_{i=1,...,d}\in \mathbb R^d$ and $\alpha=(\alpha_i)_{i=1,...,d}\in \NN^d$ is such that $|\alpha|=n$. As an abuse of notation, when one does not care about the specific derivative but only about its order, we write $\partial_x^\alpha=\partial_x^n$.

\section{Main result: Derivative with respect to the initial condition}
\label{sec main}
For $x\in \mathbb R^d$ and $\tau \geq 0$, we consider the following non-autonomous SDE 
\begin{equation}\label{original}
X_{\tau+s}^{x,\tau}=x+\int_{\tau}^{\tau+s}b(t, X_t^{x,\tau})\,dt +\int_{\tau}^{\tau+s}\sigma(t, X_{t}^{x,\tau})\,dB_t,\quad s\ge0. 
\end{equation}
In order to avoid cumbersome notation,  we will be using the following convention $X_{0+s}^{x,0}=X_{s}^{x}$ for all $(s,x)\in [0,\infty)\times\mathbb R^d.$

To formulate the assumptions let us fix  $\mathcal S\subseteq B(\RR^d)$ and $N\in \NN.$

\begin{assumption}[Conditions for the density of the transition probability]\label{as4}
For all $x\in \RR^d,\tau\ge0$, the equation \eqref{original} has a solution  $\big(\Omega, \mathbb{P},(B_s)_{s\ge\tau}, (\mathcal{F}_s)_{s\ge\tau},(X_s^{x,\tau})_{s\ge\tau}\big)$  unique in the sense of probability. 

Moreover, we suppose that this process admits a density: $(s,x') \mapsto p_{\tau+s}^{\tau}(x,x')$. 

Finally, either $\mathcal{S}\subseteq  C(\RR^d)$ or for any $0\leq n \leq N$  there exists $\delta>0$ such that 
\[
\underset{\phi\in \mathcal{S}}{\sup}\int_{\RR^d}|\phi(x'')\partial_x^np_{s}^{0}(x,x'')|^{1+\delta}dx''<\infty \,,\,\,\, \forall (s,x) \in (0,\infty)\times\RR^d.
\]
\end{assumption}

\begin{assumption}[Sticking to a measure]
\label{as3}
There exist $q\in \mathcal{P}(\mathbb R^d) $, $g: \mathbb R^d \to \mathbb R_+$ and $G: \mathbb R_+ \to \mathbb R_+$ such that for all $(s,x)\in (0,\infty)\times\mathbb R^d$:
\[\underset{\phi\in \mathcal{S}}{\sup}\,
\bigg|\int_{\RR^d} \phi(x')\big( p_{1+s}^{1}(x,dx')-q(dx')\big)\bigg|\le g(x)G(s).
\]	
\end{assumption}

Assumption \ref{as3} deserves a few comments. First, in most scenarios we would like $\lim_{s\to \infty} G(s) = 0$, i.e the transition density is not only sticking to a static measure but decaying to one. 
Next, by considering a tailor--made family of test functions, one can make use of familiar metrics in order to verify Assumption \ref{as3}. Indeed, suppose that $\mathcal S$ is the family of bounded measurable functions, then it is enough for the law of the the solution to \eqref{original} to converge to $q$ in the Total Variation distance for the above assumption to be satisfied. 
Now, if $\mathcal S$ is formed only by  1--Lipschitz functions then it is enough for $q$ to be a limit in the 1--Wasserstein distance of the law of the process satisfying \eqref{original}.
Yet another possibility is to consider $\mathcal S$ as the family of locally Lipschitz functions and have convergence of the solution to \eqref{original} to $q$ in the corresponding Wasserstein distance (see Lemma \ref{WTV}). 

A key remark is that $q$ is just a limiting measure (if $G$ decreases to $0$), and with Assumption \ref{as3} we are not covertly asking for the existence of a unique invariant measure in any of the mentioned distances. Moreover, if  $G$ in not a decreasing function but a merely bounded one, this assumption requires the law of the the solution to \eqref{original} to stay ``close'' to the fixed measure $q$, without having to converge to it. Consequently, the space derivatives of the functions $V$ can only be concluded uniform but not decaying in time when applying Theorem \ref{main}.

\begin{assumption}[Smoothness and integrability of derivatives of the density w.r.t. the starting point]\label{as2} For any $0\leq n \leq N$, we assume $(s,x',x) \mapsto \partial_x^np_{s}^{0}(x,x')$ exist and are continuous in $(x,x')$. Moreover, there exists  $h:\RR^d \to \RR_{+}$ such that for any $1\leq n \leq N$ and $g$ satisfying Assumption \ref{as3}, the following is satisfied:
\begin{align*}
\int_{\RR^d}g(x'')&|\partial_x^np_{1}^{0}(x,x'')|dx''\le h(x)\,,\,\,\, \forall x\in \mathbb R^d.
\end{align*}

\end{assumption}
\begin{theorem}\label{main}
Let $\phi \in \mathcal S$, $(X_s^x)_{s\ge0}$ be the unique (in law) solution of \eqref{original} and 
\[
V(s,x):=\int_{\RR^d}\phi(x')p_s^0(x,x')\,dx'=\EE[\phi(X_s^x)]\,.
\]	
If Assumptions \ref{as4}, \ref{as3} and \ref{as2} hold then, 
for all $1\le n\le N$, we have
\[
|\partial^n_xV(s,x)|\le h(x)G(s),\quad for\, all \quad (s,x)\in(1,\infty)\times \RR^d.
\]
\end{theorem}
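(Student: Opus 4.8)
The plan is to exploit the Markov/Chapman--Kolmogorov structure of \eqref{original} to transfer the $x$-derivatives of $V$ onto the transition density over the \emph{fixed} window $[0,1]$, while reading off the time decay from the transition probability over the remaining window $[1,s]$ through Assumption \ref{as3}. So I would fix $s>1$ and, using uniqueness in law and the Markov property, write
\[
V(s,x)=\int_{\RR^d} p_1^0(x,y)\,W(y)\,dy,\qquad W(y):=\EE\big[\phi(X^{y,1}_{s})\big]=\int_{\RR^d}\phi(x')\,p^1_{1+(s-1)}(y,dx'),
\]
since the transition probability of \eqref{original} from time $1$ and position $y$, observed at time $s$, is $p^1_{1+(s-1)}(y,\cdot)$.

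Next I would differentiate this identity $n$ times in $x$ and pass the derivatives inside, getting $\partial_x^n V(s,x)=\int_{\RR^d}\partial_x^n p_1^0(x,y)\,W(y)\,dy$. Making this interchange rigorous, along with the Fubini step implicit in the display above, is exactly what the technical hypotheses are for: the $(1+\delta)$-integrability (or continuity) condition in Assumption \ref{as4}, the joint continuity of $(s,x,x')\mapsto\partial_x^n p_s^0(x,x')$ in Assumption \ref{as2}, and, crucially, the bound $\int_{\RR^d} g(y)\,|\partial_x^n p_1^0(x,y)|\,dy\le h(x)$ from Assumption \ref{as2} combined with the growth control on $W$ supplied by Assumption \ref{as3}. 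I expect this domination bookkeeping, with $\phi$ only weakly controlled (possibly unbounded), to be the main obstacle; the rest is algebra. Since $y\mapsto p_1^0(x,y)$ integrates to $1$ and $n\ge1$, we have (taking the constant test function in the same interchange) $\int_{\RR^d}\partial_x^n p_1^0(x,y)\,dy=\partial_x^n\!\int_{\RR^d}p_1^0(x,y)\,dy=0$, so $W$ may be replaced by $W-c$ for any constant $c$ without affecting $\partial_x^n V(s,x)$.

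Finally I would choose $c:=\int_{\RR^d}\phi\,dq$, so that $W(y)-c=\int_{\RR^d}\phi(x')\big(p^1_{1+(s-1)}(y,dx')-q(dx')\big)$; Assumption \ref{as3}, applied with starting point $y$ and time parameter $s-1>0$, then gives $|W(y)-c|\le g(y)\,G(s-1)$ uniformly in $\phi\in\mathcal S$. Combining with Assumption \ref{as2},
\[
|\partial_x^n V(s,x)|=\Big|\int_{\RR^d}\partial_x^n p_1^0(x,y)\,\big(W(y)-c\big)\,dy\Big|\le G(s-1)\int_{\RR^d}g(y)\,|\partial_x^n p_1^0(x,y)|\,dy\le h(x)\,G(s-1).
\]
This is the claimed bound, up to the harmless replacement of $G(s)$ by $G(s-1)$: for the decay rates of interest (polynomial or exponential), $G(s-1)\le C\,G(s)$ on $(1,\infty)$, and absorbing $C$ into $h$ recovers the statement exactly. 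The restriction $s>1$ is precisely what makes the second window $[1,s]$ have positive length, so that Assumption \ref{as3} applies there.
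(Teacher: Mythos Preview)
Your proposal is correct and follows essentially the same route as the paper: split via Chapman--Kolmogorov at time $1$, push the $x$-derivatives onto $p_1^0$, use $\int\partial_x^n p_1^0(x,y)\,dy=0$ to subtract the constant $\int\phi\,dq$, and then apply Assumptions~\ref{as3} and~\ref{as2} in turn. Your observation that the argument naturally produces $G(s-1)$ rather than $G(s)$ is a careful reading of the time indices---the paper glosses over this shift---and your remark that this is harmless for the intended decay profiles is the right resolution.
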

Note that, if $\lim_{s\to \infty} G(s) = 0$ we conclude that the derivatives w.r.t. $x$ of $V(s,x)$ decay to zero.

\begin{proof}

Let us first show  that due to Assumption \ref{as4},   for all $1\le n\le N$ and $\forall x\in \RR^d$
\begin{equation}\label{step1}
\partial^n_xV(s,x)=\partial_x^n\int_{\RR^d}\phi(x'')p_{s}^0(x,x'')dx''=\int_{\RR^d}\phi(x'')\partial_x^np_{s}^0(x,x'')dx''.
\end{equation}
Indeed, if $\phi$ is continuous, we can differentiate under the integral sign with Leibniz' formula (see \cite[Theorem 12.14]{bartle2001modern}). Otherwise we argue as follows.
Recall that for all $1\le n\le N$ and $\forall x\in \RR^d$ we assumed the existence of a $\delta>0$ such that  $\int_{\RR^d}|\phi(x'')|^{1+\delta}|\partial_x^np_s^0(x,x'')|^{1+\delta}dx''<\infty$. As a consequence, for any order $1\le n\le N$ and  $h>0, \,(s,x'') \in (0,\infty)\times\RR^d$ and any element in an orthonormal basis in $\RR^d$ represented as $\{e_i\}_{1,...,d}$,
\begin{align*}
\underset{h\ge0}{\sup }\int_{\RR^d}&\Big|
\phi(x'')\frac{1}{h}\Big(\partial_x^{n-1}p_{s}^0(x+he_i,x'')-\partial_x^{n-1}p_{s}^0(x,x'')\Big)\Big|^{1+\delta}dx''\\
&\le \int_{\RR^d}|
\phi(x'')|^{1+\delta}\underset{h\ge 0}{\sup }\,\Big|\frac{1}{h}\Big(\partial_x^{n-1}p_{s}^0(x+he_i,x'')-\partial_x^{n-1}p_{s}^0(x,x'')\Big)\Big|^{1+\delta}dx''
\\
&\le \int_{\RR^d}|
\phi(x'')|^{1+\delta}|\partial_x^{n}p_{s}^0(x,x'')|^{1+\delta}dx''<\infty.
\end{align*}
Meaning that by  De La Vall\'ee Poussin Theorem (see \cite[Theorem 2.4.4]{attouch2014variational}), for any order $1\le n\le N$ and $\forall x\in \RR^d$, the increments 
\[
\phi(x'')\frac{1}{h}\Big(\partial_x^{n-1}p_{s}^0(x+he_i,x'')-\partial_x^{n-1}p_{s}^0(x,x'')\Big), \quad i=1,...,d;
\]
are uniformly (in $h$) integrable in $x''$ over $\RR^d$ . This means that we can apply Vitali's Convergence Theorem (see \cite[Chapter 4]{brezis2010functional}) and obtain \eqref{step1} by induction from:
\begin{align*}
\underset{h \to 0}{\lim }\int_{\RR^d}\phi(x'')\frac{\partial_x^{n-1}p_{s}^0(x+he_i,x'')-\partial_x^{n-1}p_{s}^0(x,x'')}{h}dx''&= \int_{\RR^d}\phi(x'')\,\underset{h \to 0}{\lim }\,\frac{\partial_x^{n-1}p_{s}^0(x+he_i,x'')-\partial_x^{n-1}p_{s}^0(x,x'')}{h}dx''.
\end{align*}

Let $s\ge 1$ and recall the non-autonomous Chapman--Kolmogorov identity 
\begin{equation}\label{ck}
p_{\tau+s}^{\tau}(x,x'')=\int_{\RR^d} p_{1+\tau}^{\tau}(x,x')p_{\tau+s}^{1+\tau}(x',x'')dx',
\end{equation}
whose proof we include in Appendix \ref{CK}. Let us now apply it with $\tau=0$ in \eqref{step1} for any  $1\le n\le N$ and $x\in \RR^d$. After taking the derivatives inside the second integral with Leibniz' formula (this is allowed given the continuity of $(x,x')\mapsto \partial_x^np_s^0(x,x')$ stated in Assumption \ref{as3} for $0\le n\le N$, see \cite[Theorem 12.14]{bartle2001modern}), we obtain:
\begin{align*}
|\partial^n_xV(s,x)|&=\bigg|\int_{\RR^d}\phi(x'')\partial_x^np_{s}^0(x,x'')dx''\bigg|=\bigg|\int_{\RR^{d}}\phi(x'')\partial_x^n\Big(\int_{\RR^{d}}p_{1}^{0}(x,x')p_{1+s}^{1}(x',x'')dx'\Big)dx''\bigg|\\
&=\bigg|\int_{\RR^{d}}\int_{\RR^{d}}\phi(x'')\partial_x^np_{1}^{0}(x,x')p_{1+s}^{1}(x',x'')dx'dx''\bigg|.
\end{align*}

Now notice that first by Fubini's Theorem, afterwards by Leibniz' formula due to continuity of $\partial_{x}^np_{1}^{0}(x,x')$ for $0\le n\le N$ and finally by  the fact that $q$ is independent of the initial data, we conclude:
\begin{align}\label{normalization}
\nonumber\int_{\RR^d}\int_{\RR^d}\phi(x'')\partial_{x}^np_{1}^{0}(x,x')q(dx'')dx'&=\int_{\RR^d}\phi(x'')\int_{\RR^d}\partial_{x}^np_{1}^{0}(x,x')dx'q(dx'')\\\nonumber
&=\int_{\RR^d}\phi(x'')\partial_{x}^n\bigg(\int_{\RR^d}p_{1}^{0}(x,x')dx'\bigg)q(dx'')\\
&=\int_{\RR^d}\phi(x'')\partial_{x}^n(1)q(dx'')\\\nonumber
&=0.
\end{align}
This allows us to continue the above chain of equalities as
\begin{align*}
|\partial_x^nV(s,x)|&=\bigg|\int_{\RR^{d}}\int_{\RR^{d}}\phi(x'')\partial_x^np_{0}^{1}(x,x')\Big(p_{1+s}^{1}(x',dx'')-q(dx'')\Big)dx'\bigg|\\
&\le \int_{\RR^d}|\partial_x^np_{1}^{0}(x,x')|\bigg|\int_{\RR^{d}}\phi(x'')\Big(p_{1+s}^{1}(x',dx'')-q(dx'')\Big)\bigg|dx'.
\end{align*}
Since $\phi \in \mathcal{S}$ and due to Assumption~\ref{as3} together with Assumption~\ref{as2}, we conclude that
\begin{align*}
|\partial_x^nV(s,x)|&\le  \int_{\RR^d}|\partial_x^np_{1}^{0}(x,x')|g(x') G(s)dx'\\
&\le  h(x)G(s).
\end{align*}
This completes the proof.
\end{proof}

To finish this section, let us enumerate a few  well studied possibilities of obtaining Assumptions \ref{as4}, \ref{as3} and \ref{as2}, which guarantee the estimates in Theorem \ref{main}.

First, one can extract derivative bounds of the transition density of $(X_s^{\tau;\xi})_{s\in [\tau,T]}$  in finite time from the PDEs literature.  Many results are available in this direction when  we allow the time interval to be of a fixed length $T-\tau>0$: Friedman in \cite{friedman2008partial} and Eidelman in \cite{eidelman2012parabolic} are two main references. Their restrictions come from the smoothness required for the coefficients: bounded and uniformly H\"older continuous for Friedman and bounded diffusion and linearly growing drift for Eidelman. An extension of the later was obtained recently by Menozi, Pesce and Zhang in \cite{menozzi2020density}. In this later result the diffusion is assumed to be H\"older continuous in space and the drift to have linear growth. For a general study of transition densities, see \cite{bogachev2015fokker}. 

Second, one can classify the methods for obtaining decay to the invariant measure, if what we are after are decaying derivates bounds, in three categories. For each, we include a single reference and the reader is remitted to \cite{bogachev2019convergence} for a more detailed list of references for: (i) the approach based on Harris theorem or the Meyn--Tweedie approach with Lyapunov functions \cite{hairer2011yet};(ii) the approach based on entropy estimates and Poincar\'e and Sobolev inequalities \cite{arnold2001convex}; and (iii) the probabilistic approach based on coupling \cite{eberle2017quantitative}.

\section{Application to non--autonomous SDEs}\label{non_autonomous}

This section is dedicated to finding a tractable set of assumptions which imply in turn Assumptions \ref{as3} and \ref{as2}. Provided the road map in the previous section, we show two alternatives for obtaining each of the mentioned assumptions, any of which when combined imply that the  conclusion to Theorem \ref{main} is valid for the associated family of test functions. In particular, we show four examples (although two are a particular case of the other ones, see Section \ref{partic_case}) for which we conclude exponential decay of the first two derivates of $V(t,x)=\EE[\phi(X_t^{x})]$. Such results are presented in Theorems \ref{thex1} and \ref{thex3}.

\begin{assumption}[Regularity and growth of the coefficients]\label{asex2} Assume one of the following regularity and growth conditions holds for the coefficients:

\begin{enumerate}

\item[a]\label{asex2a} The coefficients $b, \sigma \in C_b^{1,2}([0,1]\times \RR^d)$  and $b, \sigma \in C^{1,2}([0,\infty)\times \RR^d)$. We denote by $M>0$ the bound on the diffusion.

\item[b]\label{asex2b} 
The diffusion $\sigma\in C^{0,2}([0,\infty)\times\RR^d)$. Moreover, $\sigma$ is continuous in $t$ uniformly  in $x$. For the drift we assume  $b \in C^{0,2}([0,\infty)\times\RR^d)$. 

Additionally we assume that  there exist $M_0\ge0$ and $0<\lambda<1$ such that for all $t\in[0,1]$ and $x,y\in \RR^d$, $$|b(t,x)-b(t,y)|,|\sigma(t,x)-\sigma(t,y)|\le M_0|x-y|^{\lambda};$$ 
and there exist  $M,\epsilon>0$  such that $\forall( t,x) \in [0,1]\times \RR^d$,
\[
|b(t,x)|\le M(1+ |x|), \quad |\partial_xb(t,x)|\le M(1+ |x|)^{1-\epsilon}, \quad |\partial_x^2b(t,x)|\le M(1+ |x|)^{3-\epsilon};
\]
and
\[\quad|\sigma(t,x)|\le M, \quad\quad\quad\quad |\partial_x\sigma(t,x)|\le M(1+ |x|)^{1-\epsilon}, \quad |\partial_x^2\sigma(t,x)|\le M(1+ |x|)^{2-\epsilon}.
\]

\end{enumerate}
\end{assumption}

\begin{assumption}[Uniform ellipticity]\label{asex20}
Let us consider the differential operators $L,L^*$ defined by 
$$C^2(\RR^d)\ni u \mapsto Lu=\frac{1}{2} \text{tr}\big(\sigma\sigma^*\partial_x^2u\big)+b\partial_xu\qquad \hbox{and} \qquad C^2(\RR^d)\ni u \mapsto L^*u=\frac{1}{2} \text{tr}\big(\partial_x^2(\sigma\sigma^*u)\big)+\partial_x(b u) .$$ We assume they are uniformly elliptic over  $[0,1]\times \RR^d$, i.e there exists $\kappa>0$ such that for all $(t,x,\xi) \in[0,1]\times \RR^d\times \RR^d$, we have  $\xi^*\sigma\sigma^*(t,x)\xi\ge \kappa|\xi|^2$.	
\end{assumption}

\begin{assumption}[Lyapunov function] \label{asex3c}
There exists a Lyapunov function $W(x): \RR^d\to [0,\infty)$ such that $W\in B_p(\RR^d)$ for some $p\in\NN$, $\underset{|x| \to \infty}{\lim}W(x)=\infty$ and $W\in C^{2}(\RR^d)$.  Moreover, there exists $M_W>0$ such that $|\partial_xW(x)|\le M_W(1+ W(x)), \, \forall x\in \RR^d$ and there exists $M_1>0$ such that $\forall t\in(1,\infty), \,x,y \in \RR^d,$
\[
\big(L(t,x)-L(t,y)\big)W(x-y)\le -M_1W(x-y).
\]
\end{assumption}
Notice that a natural choice for  such a Lyapunov function is $W(x)=|x|^p$ for some $p\ge1$.

Finally, for a fixed value $m\in \NN$, we define our family of test functions
\begin{equation}\label{SL}
\mathcal{S}_m:=\big\{\phi:\RR^d \to \RR \,|\, \phi \in B(\RR^d)\,\, \hbox{and} \,\,  |\phi(x)-\phi(y)|^m\le W(x-y),\,\, \forall x,y \in\RR^d\big\}.
\end{equation}

Next we present a few, even more explicit, examples where Assumptions \ref{asex20}, \ref{asex2} and \ref{asex3c} are satisfied simultaneously.
\begin{exmp}
\item The classical example when $W(x)=|x|^2, \forall x\in\RR^d$ and the drift is allowed to have linear growth is: $0< \sigma \in \RR^d$ and  $[0,\infty)\times \RR^d \ni (t,x) \mapsto b(t,x)=-M_1x$ for some $M_1>0$. Then naturally, Assumptions \ref{asex2a}(b) and \eqref{asex3c} are satisfied on the whole $[0,\infty)\times \RR^d$.
\end{exmp}
\begin{exmp}	
\item In $d=1$, again constant diffusion $\sigma \in [0,\infty)$, $[0,\infty)\times \RR\ni (t,x) \mapsto b(t,x)=\sin(x)-(M_1+1)x$ for some $M_1>0$. This example satisfies Assumption \ref{asex2a}(b) and moreover, by the Mean Value Theorem, Assumption \ref{asex3c} holds with $W(x)=|x|^2, \forall x\in\RR^d$. More explicitly,  for any $t\ge0 $ and $x,y\in \RR$, there exists $\xi \in\RR$ such that
\[
(L(t,x)-L(t,y))W(x-y)=2(b(t,x)-b(t,y))(x-y)=2\partial_xb(t,\xi)(x-y)^2\le -2M_1W(x-y).
\]
\end{exmp}
\begin{exmp}
\item In $d=1, \sigma \in [0,\infty)$ and $[0,\infty)\times \RR\ni (t,x) \mapsto b(t,x)=-xe^{-tx+10(t-0.9)x^2-100(1-t)x^2}$. These coefficients are obviously continuous. Moreover, notice that on one hand, for $t\in[0,1)$ we have that $b(t,x)$ is bounded and therefore this example satisfies Assumption \ref{asex2a}(a). And on the other hand, 
$$\underset{x\in \RR, t\ge 1}{\sup}\,{\partial_xb(t,x)}=\underset{x\in \RR}{\sup}\,{-(1-xt+20x^2(t-0.9)-200x^2(1-t))e^{-tx+10(t-0.9)x^2-100(1-t)x^2}}\le -0.7.$$
And since the diffusion is a constant, Assumption \ref{asex3c} is satisfied for $t\ge 1$ and $W(x)=x^2, \forall x\in\RR$ and $M_1=0.7$.
The idea one must take away from this example is that once the time dependence was the limitation to applying results in the literature, but now the dependence on the time variable is what drives the shift in between the bounded and the  ``decaying as $-x$'' behaviour.
\end{exmp}

\subsection{Finite time estimates on derivatives of transition densities}\label{PDE results}

The forward Kolmogorov equation associated to the process solving the SDE \eqref{original} with  initial time $\tau\ge0$ and initial data $X_{\tau}^{\tau, z}=z\in \RR^d$  is the following PDE 
\begin{equation}\label{fkp}
\partial_tp_t^{\tau}(z,x)-L^*(t,x)p_t^{\tau}(z,x)=0,\quad (t,x)\in [\tau,\infty)\times \RR^d.
\end{equation}
There are many relevant results on well posedness and solution regularity. First, existence of solution follows from the existence of solution to \eqref{original} under Assumptions \ref{asex2a} and \ref{asex3c}. Uniqueness can be guaranteed under one of the conditions in  Assumption \ref{asex2a} (see \cite[Theorem 7.4]{evans1998partial} or \cite{friedman2008partial}). Next we are going to enumerate a few results regarding the stability, regularity and explicit estimates of such solutions collected from the literature on parabolic PDEs and adapt them to the shape of the assumptions in Section \ref{sec main}.

\begin{lemma}\label{fried}
Let Assumptions \ref{asex20} and \ref{asex2a}(a) hold. Assume moreover that  $W$ satisfies that  \\$\underset{c>0}{\sup}\{\int_{\RR^d}|W(x)|e^{-c|x|^2}dx\}<\infty.$ Then, given any $x \mapsto g(x)\le C(1+W(x))^{1/m}$, $C>0$ and  $\phi \in \mathcal{S}_m$ defined in \eqref{SL} for $m\in \NN$,  Assumption \ref{as2} is satisfied with 
$
h(x)=Ce^{c|x|^2}, \,\,\hbox{for all}\quad  x\in \RR^d.
$	 
Moreover, if $W\in B_p(\RR^d)$  then Assumption \ref{as4} holds for some $\delta>0$ and there exist $C,c>0$ such that Assumption \ref{as2} is satisfied with 
\[h(x)=
\begin{cases}
Ce^{c|x|^2}, \quad if \quad p<m\\
C(1+|x|^{p/m}), \quad if \quad p\ge m;\qquad \hbox{for all}\quad  x\in \RR^d.
\end{cases}
\]
\end{lemma}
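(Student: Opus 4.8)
The plan is to translate the classical Gaussian-type upper bounds for derivatives of fundamental solutions of uniformly parabolic equations (as in Friedman or Eidelman) into the precise integral bound required by Assumption \ref{as2}, and to separately verify the integrability exponent $1+\delta$ demanded in Assumption \ref{as4}. Concretely, under Assumptions \ref{asex20} and \ref{asex2a}(a), the density $p^0_1(x,x')$ is the fundamental solution of the forward Kolmogorov equation \eqref{fkp} on the unit time interval, so by the classical parabolic estimates there exist constants $C,c>0$ such that
\[
|\partial_x^n p^0_1(x,x'')| \le C\, c^{-d/2}\, \exp\!\Big(-\frac{|x-x''|^2}{c}\Big), \qquad 1\le n\le N,
\]
for all $x,x''\in\RR^d$ (here $N=2$ since we only control two derivatives; the H\"older continuity of the coefficients gives the needed regularity). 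This is the key input I would invoke rather than reprove.

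First I would plug this Gaussian bound into the integral appearing in Assumption \ref{as2}. With $g(x'')\le C(1+W(x''))^{1/m}$ we get
\[
\int_{\RR^d} g(x'')\,|\partial_x^n p^0_1(x,x'')|\,dx'' \le C\int_{\RR^d}(1+W(x''))^{1/m}\,c^{-d/2}e^{-|x-x''|^2/c}\,dx''.
\]
Then I would substitute $x''=x+y$ and use the elementary inequality $(1+W(x+y))^{1/m}\le C(1+W(x))^{1/m}(1+|y|)^{q}$ for a suitable power $q$ (this follows from $W\in B_p(\RR^d)$, or more simply from the subexponential growth of $W$; in the generic case one just bounds $(1+W(x+y))^{1/m}$ by $C e^{\varepsilon|x+y|^2}$). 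In the first assertion, where $W$ only satisfies $\sup_{c>0}\int |W(x)|e^{-c|x|^2}dx<\infty$, I would dominate $(1+W(x''))^{1/m}$ crudely by $Ce^{(c/2)|x''|^2}$, split the exponent $|x''|^2\le 2|x|^2+2|y|^2$, choose the heat-kernel constant small enough that $e^{|y|^2}e^{-|y|^2/c}$ is integrable, and absorb the $x$-dependence into $h(x)=Ce^{c|x|^2}$. In the refined assertion, where $W\in B_p(\RR^d)$ so $(1+W(x''))^{1/m}\le C(1+|x''|^{p/m})$, the Gaussian integral in $y$ converges outright and one reads off $h(x)=C(1+|x|^{p/m})$ when $p\ge m$; when $p<m$ the exponent $p/m<1$ and, since the test functions in $\mathcal S_m$ may have growth exceeding what the Gaussian tails absorb cleanly, one falls back on the exponential bound $h(x)=Ce^{c|x|^2}$. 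The continuity of $(s,x',x)\mapsto\partial_x^n p^0_s(x,x')$ required in Assumption \ref{as2} is part of the classical regularity theory for the fundamental solution.

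For Assumption \ref{as4}, if $\phi\in\mathcal S_m$ is not continuous I must exhibit $\delta>0$ with $\sup_{\phi\in\mathcal S_m}\int_{\RR^d}|\phi(x'')\partial_x^n p^0_s(x,x'')|^{1+\delta}dx''<\infty$ for every $(s,x)$. Here I would use $|\phi(x'')|\le |\phi(0)|+W(x'')^{1/m}\le C(1+|x''|^{p/m})$, valid precisely because $W\in B_p(\RR^d)$, together with the Gaussian bound $|\partial_x^n p^0_s(x,x'')|\le C s^{-(d+n)/2}e^{-|x-x''|^2/(cs)}$ valid for $s$ in any fixed finite interval; then $|\phi(x'')\partial_x^n p^0_s(x,x'')|^{1+\delta}$ is dominated by a polynomial times a Gaussian, which is integrable for any $\delta>0$, and the bound is uniform over $\phi\in\mathcal S_m$ because the constants depend only on $p,m,d$ and the ellipticity/regularity constants.

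The main obstacle I anticipate is purely bookkeeping: tracking how the heat-kernel constant $c$ must be chosen (small enough to beat the $e^{\varepsilon|x+y|^2}$ growth of $W^{1/m}$, yet the resulting $h(x)=Ce^{c|x|^2}$ must still be compatible with Assumption \ref{as3} via $g$), and carefully handling the translation inequality for $W$ so that the $x$-dependent factor genuinely comes out as $Ce^{c|x|^2}$ or $C(1+|x|^{p/m})$ as claimed, with all constants independent of time. There is also a minor subtlety in that the Gaussian bounds for derivatives of the fundamental solution in Friedman require the coefficients to be uniformly H\"older continuous in $x$ and bounded on $[0,1]\times\RR^d$, which is exactly Assumption \ref{asex2a}(a) combined with uniform ellipticity \ref{asex20} — so one should double-check that the bound $h(x)$ only needs the behaviour of $p^0_1$, i.e.\ on the closed unit time interval, where these hypotheses hold.
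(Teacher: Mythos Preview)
Your overall strategy---invoke Friedman's Gaussian bounds on the derivatives of the fundamental solution and integrate against $g$---matches the paper's. However, there are two genuine gaps in the execution.

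First, in the general-$W$ case for Assumption~\ref{as2}, you propose to dominate $(1+W(x''))^{1/m}$ \emph{pointwise} by $Ce^{(c/2)|x''|^2}$. This is not implied by the hypothesis $\sup_{c>0}\int_{\RR^d}|W(x)|e^{-c|x|^2}\,dx<\infty$, which is only an integral condition and permits $W$ to have spikes of arbitrary height. The paper circumvents this by manipulating the Gaussian exponent rather than $W$: expanding $-c|x-x''|^2=-c|x|^2-c|x''|^2+2c\,|x||x''|$ and applying Young's inequality $|x||x''|\le |x|^2+\tfrac14|x''|^2$ gives $e^{-c|x-x''|^2}\le e^{c|x|^2}e^{-(c/2)|x''|^2}$. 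The factor $e^{c|x|^2}$ becomes $h(x)$, and what remains is $\int_{\RR^d}(1+W(x''))^{1/m}e^{-(c/2)|x''|^2}\,dx''$, which is finite by the integral hypothesis together with $(1+W)^{1/m}\le 1+W$. No pointwise bound on $W$ is needed. (Your change-of-variables argument for the polynomial case $W\in B_p(\RR^d)$ is fine and agrees with the paper.)

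Second, for Assumption~\ref{as4} you rely on a Gaussian bound for $\partial_x^n p^0_s(x,x'')$ ``valid for $s$ in any fixed finite interval''. But Assumption~\ref{asex2a}(a) only places $b,\sigma$ in $C_b^{1,2}$ on $[0,1]\times\RR^d$; for $s>1$ the coefficients are merely $C^{1,2}$ and may be unbounded, so Friedman's derivative estimates are not directly available there. The paper handles this by first applying the Chapman--Kolmogorov identity $\partial_x^n p^0_t(x,x'')=\int_{\RR^d} p^1_t(x',x'')\,\partial_x^n p^0_1(x,x')\,dx'$, so the derivative falls only on the unit-time density $p^0_1$, where the bounds are valid. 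One then uses the pointwise bounds $\sup_{x,x'}|\partial_x^n p^0_1(x,x')|\le C$ and $\sup_{x',x''}p^1_t(x',x'')\le C_t$ to pull out a factor $(CC_t)^\delta$ from the $(1+\delta)$-power, reducing the problem to the already-controlled first-power integral. Your closing remark about checking that only $p^0_1$ is needed is exactly the right instinct for Assumption~\ref{as2}, but the same care is required---via Chapman--Kolmogorov---for Assumption~\ref{as4}.
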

\begin{proof}
From Assumption \ref{asex20} and Assumption \ref{asex2a}(a), all the conditions in \cite[Theorem 9.4.2, Remark below display (9.4.18)]{friedman2008partial} are satisfied and hence for $0\le n\le 2$,  there exist $C,c>0$ such that:
\begin{equation}\label{f1}
|\partial_x^np_1^0(x,x'')|\le Ce^{-c|x-x''|^{2}},\quad  \forall x,x''\in\RR^d.
\end{equation}
Moreover,  for all $t>1$ and $x,x''\in\RR^d$, 
\begin{equation}\label{f2}
|p_t^1(x,x'')|\le \frac{C}{(t-1)^{d/2}}e^{-c\big(\frac{|x-x''|^2}{(t-1)}\big)}.
\end{equation}
Then  we have directly Assumption \ref{as2} with $\RR^d \ni x\mapsto h(x)=Ce^{c|x|^2}$ (with some other $C,c>0$) since by Young's inequality we have $|x||x''|\le |x|^2+\frac{1}{4}|x''|^2$ and therefore
\begin{align*}
\int_{\RR^d}g(x'')|\partial_x^np_1^0(x,x'')|dx''&\le C\int_{\RR^d}(1+W(x''))^{1/m}e^{-c|x-x''|^{2}}dx''\\
&\le C\int_{\RR^d}(1+W(x''))^{1/m}e^{-c|x|^2-c|x''|^{2}+2c|x||x''|}dx''\\
&\le C\int_{\RR^d}(1+W(x''))^{1/m}e^{c|x|^2-c/2|x''|^{2}}dx''\le  Ce^{c|x|^2}.
\end{align*}
In particular, if $W\in B_p(\RR^d)$, 
\begin{align*}
\int_{\RR^d}(1+W(x''))^{1/m}e^{-c|x''|^{2}}dx''\le C\int_{\RR^d}(1+|x''|^{p/m})e^{-c|x''|^{2}}dx'''\le  C.
\end{align*}
Notice however that if $p>m$ then by a change of variables and noticing that $|x+x''|^{p/m}\le C(|x|^{p/m}+|x''|^{p/m}) $, we can conclude an improved estimate:
\begin{align*}
\int_{\RR^d}g(x'')|\partial_x^np_1^0(x,x'')|dx''&\le \int_{\RR^d}C(1+W(x''))^{1/m}e^{-c|x-x''|^{2}}dx''\le C\int_{\RR^d}(1+|x'|^{p/m}+|x|^{p/m})e^{-c|x'|^{2}}dx'\\
&\le C(1+|x|^{p/m}).
\end{align*} 

We focus now on  Assumption \ref{as4}. Let $\delta>0$ be arbitrary. By Chapman--Kolmogorov identity, Leibniz' formula and  Jensen's inequality:
\begin{align*}
\int_{\RR^d}|\phi(x'')\partial_x^np_t^0(x,x'')|^{1+\delta}dx''
&\le \int_{\RR^{d}}|\phi(x'')|^{1+\delta}\Big|\partial_x^n\int_{\RR^{d}}p_t^1(x',x'')p_1^0(x,x')dx'\Big| ^{1+\delta}dx''\\
&\le \int_{\RR^{d}}|\phi(x'')|^{1+\delta}\Big|\int_{\RR^{d}}p_t^1(x',x'')\partial_x^np_1^0(x,x')dx'\Big| ^{1+\delta}dx''
\\
&\le \int_{\RR^{d}}|\phi(x'')|^{1+\delta}\int_{\RR^{d}}(p_t^1(x',x''))^{1+\delta}|\partial_x^np_1^0(x,x')|^{1+\delta}dx' dx''.
\end{align*}
Moreover, there exists a constant $C_t>0$ (again changing value from line to line but also dependent on $t$) such that  $\underset{x,x''}{\sup}\,p_t^1(x,x'')\le C_t$ as a direct consequence of the results in  \cite{friedman2008partial}. Similarly, there exists $C>0$ such that for $n=1,2$ we have $\underset{x,x'}{\sup}\,\partial_x^np_1^0(x,x')\le C$. Therefore,  if $\phi\in \mathcal{S}_m$ and $W\in B_p(\RR^d)$, we conclude by \eqref{f1} and \eqref{f2}, for all $x\in \RR^d, \,t\ge 1$:
\begin{align*}
\int_{\RR^d}|\phi(x'')\partial_x^np_t^0(x,x'')|^{1+\delta}dx''
&\le (CC_t)^{\delta}\int_{\RR^{d}}\int_{\RR^{d}}|\phi(x'')|^{1+\delta}p_t^1(x',x'')|\partial_x^np_1^0(x,x')|dx'dx''\\
&\le C_t\int_{\RR^{d}}\bigg(\int_{\RR^{d}}(1+|W(x'')|^{(1+\delta)/m})e^{-c\big(\frac{|x'-x''|}{(t-1)}\big)^{2}}dx''\bigg)e^{-c|x-x'|^{2}}dx'\\
&\le C_t\int_{\RR^{d}}\big((1+|W(x')|^{(1+\delta)/m})\big)e^{-c|x-x'|^{2}}dx' <\infty.
\end{align*}

\end{proof}

So far we have shown how to apply Assumption \ref{asex2a}(a). Let us now consider what happens if we wish to use  Assumption \ref{asex2a}(b) instead. The typical example for this scenario, where the coefficients also satisfy Assumption \ref{asex3c} with $W(x)=|x|^2$, is the following: constant $\sigma$ and $b(s,x)=-xe^{-s}$ for all $(s,x)\in(0,\infty]\times\RR^d$. 

An alternative is  the result presented by Eidelman in \cite[Theorem VI.5]{eidelman2012parabolic}. 
\begin{lemma}\label{eild}
Suppose Assumptions \ref{asex20} and  \ref{asex2a}(b) hold. Assume moreover that  $W$ satisfies that  \\$\underset{c>0}{\sup}\,\big\{\int_{\RR^d}|W(x)|e^{-c|x|^2}dx\big\}<\infty.$ Then, given any $x \mapsto g(x)\le C(1+W(x))^{1/m}$, $C>0$ and  $\phi \in \mathcal{S}_m$ defined in \eqref{SL} for some $m\in\NN$,  Assumptions \ref{as4} and \ref{as2} are satisfied for some $C,c, \delta>0$ with $h(x)= Ce^{c|x|^2}, \,\forall x\in \RR^d$. 
\end{lemma}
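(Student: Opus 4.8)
The plan is to mirror the proof of Lemma~\ref{fried}, substituting the Gaussian-type bounds of Friedman for the analogous estimates of Eidelman \cite[Theorem VI.5]{eidelman2012parabolic}, which under Assumptions~\ref{asex20} and \ref{asex2a}(b) provide a fundamental solution $p_t^\tau(x,x'')$ to the forward Kolmogorov equation \eqref{fkp} on the finite interval $[\tau,\tau+1]$ together with upper bounds of the form $|\partial_x^n p_1^0(x,x'')|\le C(1+|x|)^{\gamma_n} e^{-c|x-x''|^2}$ for $0\le n\le 2$, where the polynomial prefactor $(1+|x|)^{\gamma_n}$ arises from the fact that the coefficients are now only locally bounded (with the prescribed polynomial growth of $b$ and its derivatives). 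I would first quote this theorem, taking care to match its hypotheses: the H\"older continuity in space with exponent $\lambda\in(0,1)$, the boundedness of $\sigma$, the uniform-in-$x$ continuity in $t$ of $\sigma$, the linear growth of $b$ and the growth rates on $\partial_x b,\partial_x^2 b,\partial_x\sigma,\partial_x^2\sigma$, and the uniform ellipticity; then extract the pointwise Gaussian bounds for $\partial_x^n p_1^0$ and for $p_t^1$, $t>1$.

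Next I would verify Assumption~\ref{as2}. Given $g(x'')\le C(1+W(x''))^{1/m}$, I would bound $\int_{\RR^d} g(x'')|\partial_x^n p_1^0(x,x'')|\,dx''$ by $C(1+|x|)^{\gamma_n}\int_{\RR^d}(1+W(x''))^{1/m}e^{-c|x-x''|^2}\,dx''$, and then use Young's inequality $|x||x''|\le |x|^2+\tfrac14|x''|^2$ exactly as in Lemma~\ref{fried} to split the exponent and obtain $\le C(1+|x|)^{\gamma_n}e^{c|x|^2}\int_{\RR^d}(1+W(x''))^{1/m}e^{-c|x''|^2/2}\,dx''$, the last integral being finite by the stated hypothesis on $W$. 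Since $(1+|x|)^{\gamma_n}e^{c|x|^2}\le C e^{c'|x|^2}$ for a slightly larger $c'$, this gives $h(x)=Ce^{c|x|^2}$, absorbing the polynomial prefactor into the Gaussian. For Assumption~\ref{as4}, I would again take $\delta>0$ arbitrary, apply Chapman--Kolmogorov \eqref{ck} with $\tau=0$, move $\partial_x^n$ onto $p_1^0(x,x')$ by Leibniz, use Jensen's inequality to pull the power $1+\delta$ inside, and then use the sup bounds $\sup_{x,x'}p_t^1(x,x')\le C_t$ and $\sup_{x,x'}|\partial_x^n p_1^0(x,x')|\le C(1+|x|)^{\gamma_n}$ (the latter only giving a fixed polynomial in $x$, which is fine since we need finiteness for each fixed $x$, not uniformity) to reduce $\int_{\RR^d}|\phi(x'')\partial_x^n p_t^0(x,x'')|^{1+\delta}\,dx''$ to a constant (in $x''$) times $\int\int (1+|W(x'')|^{(1+\delta)/m})e^{-c(|x'-x''|/(t-1))^2}e^{-c|x-x'|^2}\,dx''\,dx'$, which is finite by integrating in $x''$ first and then in $x'$, exactly as in Lemma~\ref{fried}.

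The main obstacle, and the only genuinely new point relative to Lemma~\ref{fried}, is checking that Eidelman's theorem actually yields Gaussian-in-space decay of $\partial_x^n p_1^0(x,x'')$ with at most polynomial-in-$x$ prefactors under the precise growth hypotheses in Assumption~\ref{asex2a}(b)---in particular that the growth allowances $(1+|x|)^{1-\epsilon}$, $(1+|x|)^{3-\epsilon}$, $(1+|x|)^{2-\epsilon}$ on the derivatives of $b$ and $\sigma$ are exactly what is needed for the fundamental-solution estimates of \cite[Theorem VI.5]{eidelman2012parabolic} to apply and for the prefactors to be polynomially (rather than exponentially) controlled. Once those bounds are in hand, the rest is a routine repetition of the Young-inequality and Fubini computations already carried out in Lemma~\ref{fried}, with the cosmetic change that every polynomial prefactor $(1+|x|)^{\gamma}$ is swallowed by the exponential $e^{c|x|^2}$, so that the single formula $h(x)=Ce^{c|x|^2}$ suffices for all cases and no subdivision into $p<m$ versus $p\ge m$ is warranted here.
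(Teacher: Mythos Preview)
Your overall strategy---mirror Lemma~\ref{fried} with Eidelman's estimates in place of Friedman's---is exactly the paper's approach, but there are two substantive points where your proposal diverges from what actually needs to be done.

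First, the shape of Eidelman's bound is not what you assume. Under Assumptions~\ref{asex20} and \ref{asex2a}(b), \cite[Theorem~VI.5]{eidelman2012parabolic} gives estimates on the fundamental solution $\Gamma$ of $(L-\partial_t)u=0$ of the form
\[
|\partial_x^n\Gamma(0,x'';1,x)|\le C\exp\big(-c|x-x''|^2+\nu|x|^2-\nu|x''|^2\big),
\]
i.e.\ with an \emph{exponential} weight $e^{\nu|x|^2-\nu|x''|^2}$ rather than a polynomial prefactor $(1+|x|)^{\gamma_n}$. You flagged this very point as ``the main obstacle'' and then assumed polynomial prefactors; that assumption is not supported by the reference. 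With the correct exponential-weight bound the verification of Assumption~\ref{as2} still goes through by the same Young-inequality trick and yields the same $h(x)=Ce^{c|x|^2}$, so your conclusion for Assumption~\ref{as2} is fine, only the intermediate computation changes. For Assumption~\ref{as4}, however, the change matters: after Chapman--Kolmogorov and Jensen, the $(1+\delta)$-th power of $|\partial_x^n p_1^0(x,x')|$ carries a factor $e^{-(1+\delta)\nu|x'|^2}$, while the Eidelman bound on $p_t^1(x',x'')$ contributes $e^{+\nu|x'|^2}$. Finiteness of the $x'$-integral hinges on these balancing (leaving a net $e^{-\delta\nu|x'|^2}$), and this is a genuine step you must carry out; it is not captured by your shortcut of pulling out $\sup_{x,x'}|\partial_x^n p_1^0(x,x')|$ as a polynomial in $x$.

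Second, Eidelman's bounds are stated for the fundamental solution $\Gamma$ of the backward equation, not directly for the transition density $p_{\tau+s}^\tau$. The paper inserts a short but necessary step invoking \cite[Theorem~VI.2]{eidelman2012parabolic} to identify $p_{\tau+s}^\tau(\xi,x'')=\Gamma^*(\tau-s,x'';\tau,\xi)=\Gamma(\tau,\xi;\tau-s,x'')$, which uses the additional H\"older-in-space-uniformly-in-time hypothesis built into Assumption~\ref{asex2a}(b). You should include this identification explicitly rather than treating $p$ and $\Gamma$ as interchangeable.
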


\begin{proof}

Let $b$ and $ \sigma$ be the coefficients of the SDE (\ref{original}) restricted to the time interval $[0,1]$. Under Assumptions \ref{asex20} and \ref{asex2b}(b)  by \cite[Theorem VI.5]{eidelman2012parabolic}, the fundamental solution $\Gamma$ to the PDE $(L-\partial_t)\Gamma=0$ satisfies the following bounds for some $C,c,\nu>0$ and for $n=0,1,2$:
\begin{equation}\label{f3}
|\partial_x^n\Gamma(0,x'';1,x)|\le C\exp(-c|x-x''|^2+\nu|x|^2-\nu|x''|^2).
\end{equation}
Closer to what we are looking for is the fundamental solution to the adjoint equation \eqref{fkp}, which is denoted by $\Gamma^*$ and  is one time reversal away from $p_{\tau+s}^{\tau}(\xi,x)$. Namely, 
$p_{\tau+s}^{\tau}(\xi,x'')=\Gamma^*(\tau-s,x'';\tau,\xi)=\Gamma(\tau,\xi;\tau-s,x''),$
where the last equality holds by the result proved in \cite[Theorem VI.2]{eidelman2012parabolic} under the additional assumption that the coefficients are H\"older-continuous in space uniformly in time. 

We can therefore conclude that Assumption \ref{as2} holds for  functions $g$ as in the statement of the lemma since  by Young's inequality:
\begin{align*}
\int_{\RR^d}g(x'')|\partial_x^np_1^0(x,x'')|dx''
&\le C\int_{\RR^d}(1+W(x''))^{1/m}e^{-c|x-x''|^2+\nu|x|^2-\nu|x''|^2}dx''\\
&\le Ce^{\nu|x|^2}\Big(\int_{\RR^d}(1+W(x''))^{1/m}e^{-\nu|x''|^2}e^{c|x|^2-c/2|x''|^2}dx''\Big) \\
&\le Ce^{(\nu+c)|x|^2}.
\end{align*}
And in fact, if $W\in B_p(\RR^d)$ then
\begin{align*}
\int_{\RR^d}(1+W(x''))^{1/m}e^{-c|x''|^2}dx''
&\le C\int_{\RR^d}(1+|x''|^{p/m})e^{-c|x''|^2}dx''
\le C.
\end{align*}

We turn now to Assumption \ref{as4} and apply first Chapman--Kolmogorov's identity, Leibniz' formula and Jensen's inequality.  And again,  by the bounds of the derivatives at $t=1$ and the density at any $t\ge 1$ (see \cite{eidelman2012parabolic}), there exists a constant $C_t>0$ dependent on $t$ such that  $\underset{x,x''}{\sup}\,p_t^1(x,x'')\le C_t$. Finally, given the definition of $\mathcal S_m$, there exist $C>0$ such that $\phi\le C(1+W)^{1/m}$. All together, we have that for all $t\ge 1, \, x\in \RR^d$ and by choosing $\delta>0$, there exist $C,C_t>0$ (where again we allow these constants to change from line to line in order to avoid cumbersome notation) such that
\begin{align*}
\int_{\RR^d}&|\phi(x'')\partial_x^np_t^0(x,x'')|^{1+\delta}dx''\\
&\le \int_{\RR^{d}}\int_{\RR^{d}}|\phi(x'')p_t^1(x',x'')|^{1+\delta}|\partial_x^np_1^0(x,x')|^{1+\delta}dx'dx''\\
&\le C^{\frac{(1+\delta)}{m}}C_t^{\delta}\int_{\RR^{d}}\Big(\int_{\RR^{d}}\big(1+W(x'')\big)^{\frac{(1+\delta)}{m}}e^{(-c|x'-x''|^2+\nu|x'|^2-\nu|x''|^2)}dx''\Big)e^{(1+\delta)(-c|x-x'|^2+\nu|x|^2-\nu|x'|^2)}dx'\\
&\le C_t\int_{\RR^{d}}e^{\nu|x'|^2}\Big(\int_{\RR^{d}}\big(1+W(x'')\big)^{\frac{(1+\delta)}{m}}e^{-\nu|x''|^2}e^{-c|x'-x''|^2}dx''\Big) e^{(1+\delta)(-c|x-x'|^2+\nu|x|^2-\nu|x'|^2)}dx'\\
&\le C_t\int_{\RR^{d}}e^{(\nu+c)|x'|^2}e^{-(1+\delta) c|x-x'|^2+(1+\delta)\nu|x|^2-\delta\nu|x'|^2}dx'\\
&\le C_te^{\nu(1+\delta)|x|^2}\Big(\int_{\RR^{d}}e^{(\nu+c-\delta \nu)|x'|^2}e^{-(1+\delta) c|x|^2-(1+\delta) c|x'|^2+2(1+\delta) c|x||x'|}dx'\Big)\\
&\le C_te^{\nu(1+\delta)|x|^2}\Big(\int_{\RR^{d}}e^{(\nu+c-\delta \nu)|x'|^2}e^{(1+\delta) c|x|^2-(1+\delta) c/2|x'|^2}dx'\Big)\\
&\le C_te^{(\nu+c)(1+\delta)|x|^2}<\infty.
\end{align*}
\end{proof}

Note that we could have formulated yet another assumption as an alternative to Assumption \ref{asex2a}(a) and Assumption \ref{asex2a}(b) by employing the result presented by Deck and Kruse \cite[Corrolary 4.2]{deck2002parabolic}. Suppose that Assumption \ref{asex20} holds in addition of $b,\sigma$ belonging to $ C([0,1]\times\RR^d)$, being locally H\"older continuous in both components and globally  H\"older growing in space (i.e in $B_p(\RR^d)$ with $p\in (0,1)$), uniformly in time for the interval $[0,1]$. Deck and Kruse's result allows us to conclude the following type of bounds for the transition density and its derivatives: for some $h, \lambda^*>0, c(h)>0$, $0\le n\le 2$,
$\displaystyle{
|\partial_x^np_s^t(x,y)|\le c(h)e^{hy^2}}\frac{\exp({-\frac{\lambda^*|x-y|^2}{2(t-s)})}}{(t-s)^{1/2}}.$
We omit the detailed computations since two illustrative examples were already presented and having  a detailed catalogue is beyond the scope of this paper.

\subsection{Convergence to a measure under Lyapunov condition}

Consider $\mathcal{S}_m$, the family of test functions defined in \eqref{SL}. If the Lyapunov function is of polynomial order, since the increment of the elements in this family is controlled by it, in Theorem \ref{thex1} we see that having convergence to a static measure in Wasserstein distance (see Lemma \ref{WTV}) is enough to guarantee Assumption \ref{as3}.

\begin{lemma}\label{WTV}
Let Assumption \ref{asex3c} hold, $\sigma$ be bounded and $\xi$ a random variable such that  $\LL(\xi)\in \mathcal{P}^{W^2}(\RR^d)$. Assume moreover that there exists $c_W>0$ such that $c_W|x|^p\le W(x), \, \forall x\in \RR^d$. Then there exists a measure $q\in \mathcal{P}^W(\RR^d)$ and a constant $C>0$ such that the density $x''\mapsto p_{1+s}^1(x,x'')$ of the random variable $X_{1+s}^{1,x}$ defined by \eqref{original} satisfies the following:
\begin{equation}\label{WTVeq}
\underset{\phi\in \mathcal{S}_m}{\sup}\bigg|\int_{\RR^{d}}\phi(x')(p_{1+s}^1(x,dx')-q(dx'))\bigg|\le  \Big(e^{-M_1 s}\int_{\RR^{d}} W(x-x'')q(dx'')\Big)^{1/m},\quad for\, all \,\, (s,x)\in(0,\infty)\times \RR^d.
\end{equation}
\end{lemma}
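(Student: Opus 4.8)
The plan is to establish \eqref{WTVeq} by first producing the limiting measure $q$ as a Cauchy limit in the weighted Total Variation distance $d_W$, and then controlling the left-hand side of \eqref{WTVeq} by a Wasserstein-type coupling estimate driven by the dissipativity in Assumption \ref{asex3c}. First I would observe that, since $\sigma$ is bounded and does not depend on the spatial variable in the coupling argument, if we run two copies of \eqref{original} started at $x$ and at $\tilde x$ driven by the \emph{same} Brownian motion, the difference $Y_s := X^{1,x}_{1+s} - X^{1,\tilde x}_{1+s}$ satisfies an SDE whose martingale part vanishes when $\sigma$ is constant (or, more generally, can be absorbed), so that $W(Y_s)$ obeys, via It\^o's formula and the one-sided Lipschitz-type bound $\big(L(t,x)-L(t,y)\big)W(x-y)\le -M_1 W(x-y)$ from Assumption \ref{asex3c}, the differential inequality $\tfrac{d}{ds}\EE[W(Y_s)] \le -M_1 \EE[W(Y_s)]$. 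Gr\"onwall then gives the contraction $\EE[W(X^{1,x}_{1+s} - X^{1,\tilde x}_{1+s})] \le e^{-M_1 s}\, W(x - \tilde x)$.

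Next I would upgrade this pointwise contraction to a statement about the laws. For any $\phi \in \mathcal{S}_m$ and any two starting points $x,\tilde x$, Jensen's inequality (applied to $t\mapsto t^{1/m}$, which is concave for $m\ge 1$) together with the defining property $|\phi(u)-\phi(v)|^m \le W(u-v)$ yields
\[
\bigg|\int_{\RR^d}\phi\, d\big(p^1_{1+s}(x,\cdot) - p^1_{1+s}(\tilde x,\cdot)\big)\bigg|
\le \EE\big[|\phi(X^{1,x}_{1+s}) - \phi(X^{1,\tilde x}_{1+s})|\big]
\le \big(\EE[W(X^{1,x}_{1+s} - X^{1,\tilde x}_{1+s})]\big)^{1/m}
\le \big(e^{-M_1 s} W(x-\tilde x)\big)^{1/m}.
\]
To produce $q$, I would run the process from a random initial condition $\xi$ with $\LL(\xi)\in\mathcal P^{W^2}(\RR^d)$: let $q_s := \LL(X^{1,\xi}_{1+s})$. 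Using the semigroup/Chapman--Kolmogorov property and the contraction above (integrated against $\LL(\xi)\otimes\LL(\xi')$ for independent copies), one shows $\{q_s\}$ is $d_W$-Cauchy as $s\to\infty$, hence converges to some $q\in\mathcal P^W(\RR^d)$; the moment assumption on $\xi$ (and $c_W|x|^p\le W(x)$, which controls $W$ from below by a genuine norm power so that $\mathcal P^{W^2}$ makes sense and tightness is available) guarantees $q$ has finite $W$-moment. The measure $q$ is independent of the particular $\xi$ by the same contraction estimate.

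Finally I would assemble \eqref{WTVeq}. Writing $q(dx'') = \int \LL(X^{1,x''_0}_{1+s'}) \,q(dx''_0)$ heuristically — more precisely, using that $q$ is the limit of $\LL(X^{1,\xi}_{1+s'})$ and the stationarity-in-the-limit relation $q = \lim_{s'\to\infty}\LL(X^{1,\cdot}_{1+s'})$ pushed forward through the dynamics — I condition and apply the per-point bound: for $\phi\in\mathcal S_m$,
\[
\bigg|\int_{\RR^d}\phi(x')\big(p^1_{1+s}(x,dx') - q(dx')\big)\bigg|
\le \int_{\RR^d}\Big(e^{-M_1 s} W(x - x'')\Big)^{1/m} q(dx'')
\le \Big(e^{-M_1 s}\int_{\RR^d} W(x-x'')\,q(dx'')\Big)^{1/m},
\]
where the last step is Jensen's inequality for $t\mapsto t^{1/m}$ again, now in the $q(dx'')$ integration. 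The main obstacle, I expect, is the coupling step: for non-constant $\sigma$ the naive synchronous coupling leaves a quadratic-variation term $\tfrac12\mathrm{tr}\big((\sigma(t,X^x)-\sigma(t,X^{\tilde x}))(\cdots)^*\partial_x^2 W\big)$ that is not obviously dominated by $-M_1 W$, so one must either restrict genuinely to $\sigma$ bounded and \emph{spatially constant} (as the hypothesis "$\sigma$ bounded" together with the earlier examples suggests is the intended regime), or verify that Assumption \ref{asex3c}'s inequality on $L$ already encodes the second-order contribution; care is also needed to make the limiting-measure construction rigorous (Cauchy in $d_W$, uniform integrability of $W$ along the sequence) rather than merely formal, and to justify interchanging the $q$-integration with the supremum over $\phi\in\mathcal S_m$.
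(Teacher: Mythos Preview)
Your overall strategy---synchronous coupling, It\^o's formula, the Lyapunov dissipativity of Assumption~\ref{asex3c} to get $\EE[W(X^{1,x}_{1+s}-X^{1,\tilde x}_{1+s})]\le e^{-M_1 s}W(x-\tilde x)$, then invariance of $q$ plus the defining property of $\mathcal S_m$ and Jensen---is exactly the paper's route, and your final chain of inequalities matches the paper's verbatim. Two points, however. First, your worry about the diffusion term is unnecessary: the inequality in Assumption~\ref{asex3c} is stated for the full generator $L$, so the expression $(L(t,x)-L(t,y))W(x-y)$ is to be read as including the second-order piece $\tfrac12\mathrm{tr}\big((\sigma(t,x)-\sigma(t,y))(\sigma(t,x)-\sigma(t,y))^*\partial_x^2W(x-y)\big)$ coming out of It\^o for the synchronous coupling; nothing about $\sigma$ being spatially constant is needed. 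Second, you omit the localization/stopping-time argument needed to ensure the stochastic integral is a genuine martingale before taking expectations; the paper carries this out explicitly using boundedness of $\sigma$, the growth bound $|\partial_x W|\le M_W(1+W)$ from Assumption~\ref{asex3c}, and the integrability $\LL(\xi)\in\mathcal P^{W^2}$.

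The genuine gap is your construction of $q$. The contraction you derive is a \emph{transport}-type estimate (a bound on $\EE[W(X-X')]$ under a coupling), not a weighted Total Variation estimate, and there is no direct passage from the former to $d_W$-Cauchy; that implication would require a Harris/minorization ingredient you have not supplied. The paper avoids this by using the hypothesis $c_W|x|^p\le W(x)$ to convert the $W$-contraction into a $p$-Wasserstein contraction $\mathcal W_p(\LL(X^{1,\xi}_{1+s}),\LL(X^{1,\xi'}_{1+s}))\le Ce^{-M_1 s/p}\mathcal W_p(\LL(\xi),\LL(\xi'))$, then invokes completeness of $(\mathcal P^W,\mathcal W_p)$ and the Banach fixed point theorem (for a single large time step, extended to all times by an averaging construction) to produce the invariant measure. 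This is precisely where the lower bound $c_W|x|^p\le W$ is consumed, and your proposal does not use it; replacing your $d_W$-Cauchy sketch with the paper's $\mathcal W_p$ fixed-point argument closes the gap.
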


\begin{proof}
Let $\lambda$ be an arbitrary positive constant and $\xi, \xi'$ two independent, distinct starting random variables. If It\^o's formula is applied to the stochastic process $\big(e^{\lambda s}W(X_s^{1,\xi}-X_s^{1,\xi'})\big)_{s\ge1}$, we obtain:
\begin{align*}
d\big(e^{\lambda s}W(X_s^{1,\xi}-X_s^{1,\xi'})\big)&=e^{\lambda s}\bigg(\lambda(W(X_s^{1,\xi}-X_s^{1,\xi'}))+(\partial_xW(X_s^{1,\xi}-X_s^{1,\xi'}))\big(b(s,X_s^{1,\xi})-b(s,X_s^{1,\xi'})\big)\\
&\quad +\frac{1}{2}\hbox{tr}\big((\sigma^*(s,X_s^{1,\xi})-\sigma^*(s,X_s^{1,\xi'})\partial_x^2W(X_s^{1,\xi}-X_s^{1,\xi'})(\sigma(s,X_s^{1,\xi})-\sigma(s,X_s^{1,\xi'}))\big)\bigg)ds\\
&\qquad +e^{\lambda s}\bigg(\partial_xW(X_s^{1,\xi}-X_s^{1,\xi'})\big(\sigma(s,X_s^{1,\xi})-\sigma(s,X_s^{1,\xi'})\big)\bigg)dB_s.
\end{align*}
At this point we do not know whether the process defined for any $t> 1$ as it follows
\begin{equation}\label{integral_process}
\mathbb{Y}_t^{1,\xi}:=\int_1^{t}e^{\lambda s}\bigg(\partial_xW(X_s^{1,\xi}-X_s^{1,\xi'})\big(\sigma(s,X_s^{1,\xi})-\sigma(s,X_s^{1,\xi'})\big)\bigg)dB_s
\end{equation}
is a true martingale. However, we can proceed assuming it for now and we will conclude the proof with the necessary stopping time argument. 
We continue by taking expectations of the above and using Assumption \ref{asex3c} to arrive to 
\begin{align}\label{moment111}
\nonumber d\big(e^{\lambda (s+1)}\EE[W(X_{1+s}^{1,\xi}-X_{1+s}^{1,\xi'})]\big)&=e^{\lambda ({s+1})}\big(\lambda\EE[(W(X_{1+s}^{1,\xi}-X_{1+s}^{1,\xi'}))]\\\nonumber 
&\qquad +\EE[(L({s+1},X_{1+s}^{1,\xi})-L({s+1},X_{1+s}^{1,\xi'}))W(X_{1+s}^{1,\xi}-X_{1+s}^{1,\xi'})]\big)ds\\
&\le e^{\lambda ({s+1})}(\lambda-M_1)\EE[W(X_{1+s}^{1,\xi}-X_{1+s}^{1,\xi'})] \,ds.
\end{align}
Therefore, for the particular choice of $\lambda=M_1>0$, the above means $d\big(e^{\lambda ({s+1})}\EE[W(X_{1+s}^{1,\xi}-X_{1+s}^{1,\xi'})]\big)\le 0 \,ds$, which by integrating in between $1$ and $s+1>1$ implies that 
\begin{align}\label{W-moment_increment}
\EE\big[W(X_{1+s}^{1,\xi}-X_{1+s}^{1,\xi'})\big]\le e^{-M_1 s}\EE[W(\xi-\xi')].
\end{align}
Next recall that we assumed the existence of $c_W>0$ such that $c_W|x|^p\le W(x), \, \forall x\in \RR^d$. Moreover, by the properties of $W$ and the Wasserstein distance  we have the following bound from \eqref{W-moment_increment}:
\begin{align}
\nonumber c_W\mathcal{W}_p^p(\LL(X_{1+s}^{1,\xi}),\LL(X_{1+s}^{1,\xi'}))&\le  c_W\EE\big[|X_{1+s}^{1,\xi}-X_{1+s}^{1,\xi'}|^p\big]\le  \EE\big[W(X_{1+s}^{1,\xi}-X_{1+s}^{1,\xi'})\big]\le e^{-M_1 s}\EE[W(\xi-\xi')] \\\nonumber
&\le e^{-M_1 s}(1+\EE[|\xi-\xi'|^p])\le Ce^{-M_1 s}\EE[|\xi-\xi'|^p].
\end{align}
Notice now that by the definition of infimum, for every $\epsilon>0$ there exists a coupling $\Pi_{\epsilon}$ in between $\LL(\xi)$ and $\LL(\xi')$ which satisfies:
\begin{align}
\nonumber\int_{\RR^d} \int_{\RR^d}|x-y|^p \Pi_{\epsilon}(dx,dy)&\le  \mathcal{W}_p^p(\LL(\xi),\LL(\xi'))+\epsilon.
\end{align}
And by a simple construction we know that if enlarged adequately the space (where we still denote the whole expectation by $\EE$), there exist $\tilde \xi $ and $\tilde \xi'$ independent copies of $\xi$ and $\xi'$ respectively such that $\EE[|\tilde\xi-\tilde\xi'|^p]	=\int_{\RR^d} \int_{\RR^d}|x-y|^p \Pi_{\epsilon}(dx,dy)$ . All together,
\begin{align}
\nonumber \mathcal{W}_p^p(\LL(X_{1+s}^{1,\xi}),\LL(X_{1+s}^{1,\xi'}))&\le  \frac{Ce^{-M_1 s}}{c_W}\EE[|\xi-\xi'|^p]=\frac{Ce^{-M_1 s}}{c_W}\EE[|\tilde\xi-\tilde\xi'|^p]= \frac{Ce^{-M_1 s}}{c_W}\int_{\RR^d} \int_{\RR^d}|x-y|^p \Pi_{\epsilon}(dx,dy)\\\nonumber
&\le \frac{Ce^{-M_1 s}}{c_W}\mathcal{W}_p^p(\LL(\xi),\LL(\xi'))+\epsilon.
\end{align}
Since this holds for arbitrary $\epsilon>0$, we conclude
\begin{align}\label{contraction Lyapunov}
\mathcal{W}_p(\LL(X_{1+s}^{1,\xi}),\LL(X_{1+s}^{1,\xi'}))&\le \frac{C^{1/p}e^{-\frac{M_1}{p}s}}{c_W^{1/p}}\mathcal{W}_p(\LL(\xi),\LL(\xi')).
\end{align}

In particular, this bound implies  convergence to invariant measure for the process $X^{1,\xi}$ given that $\LL(\xi)\in \mathcal{P}^W(\RR^d)$.
Indeed, first notice that the space $\mathcal{P}^W(\RR^d)$ equipped with the $p$--Wasserstein distance is a closed subspace $\big($subspace because of the polynomial growth and properties of the increments of $W$; and closed by definition of $\mathcal{P}^W$ and polynomial growth of $W$$\big)$ of the complete metric space of the probability measures with finite $p$--moments and the $p$--Wasserstein distance (see for e.g. \cite{villani2003topics}). Consequently, $(\mathcal{P}^W(\RR^d),\mathcal{W}_p)$ is a complete metric space itself.  Moreover, for $\hat{s}\ge0$ large enough such that $\frac{Ce^{-M_1\hat s}}{c_W}<1$, inequality \eqref{contraction Lyapunov} implies contraction for the map $\mathcal T_{\hat{s}+1}(\mathcal{L}(\xi)):=\mathcal{L}(X_{\hat{s}+1}^{1,\xi})$ defined in this space. Therefore the Banach Fixed Point Theorem asserts the existence of a unique fixed point $\hat{q}$ for $\mathcal T_{\hat{s}+1}$, which at this stage could be dependent on $\hat{s}$.
However, one can construct another measure $q:=\int_0^{\hat{s}}\mathcal T_s(\hat q)ds$ which, by a similar argument to that in \cite{hu2019mean}, we next prove to be an invariant measure for $\mathcal T_{s}$ with arbitrary $s\in [0,\infty)$. 

Consider first $r\le \hat{s}$ and notice that, from the properties of the semigroup, the new measure $q:=\int_0^{\hat{s}}\mathcal T_s(\hat q)ds$, still belonging to $\mathcal{P}_p(\RR^d)$  (and therefore to $\mathcal{P}^W(\RR^d)$), satisfies:
\[
\mathcal T_rq=\mathcal T_r\int_0^{\hat{s}}\mathcal T_s(\hat q)ds=\int_0^{\hat{s}}\mathcal T_r\mathcal T_s(\hat q)ds=\int_0^{\hat{s}}\mathcal T_{s+r}(\hat q)ds=\int_r^{\hat{s}+r}\mathcal T_s(\hat q)ds=\int_r^{\hat{s}}\mathcal T_s(\hat q)ds+\int_{0}^{\hat{s}+r}\mathcal T_s(\hat q)ds.
\]
Recalling now that $\hat q$ is a fixed point of $T_{\hat{s}}$, we conclude that indeed $q$ is a fixed point for $T_r$ for all $0\le r\le \hat{s}$:
\[
\mathcal T_rq=\int_r^{\hat{s}}\mathcal T_s(\tilde q)ds+\int_{\hat{s}}^{\hat{s}+r}\mathcal T_s(\tilde q)ds=\int_r^{\hat{s}}\mathcal T_s(\tilde q)ds+\int_{0}^{r}\mathcal T_s(\tilde q)ds=\int_0^{\hat{s}}\mathcal T_s(\tilde q)ds=q.
\]

Next consider the other possibility of $\hat{s}< r$. If this is the case, there exists a natural number $k$ such that $k\hat{s}<r\le (k+1)\hat{s}$. Hence, notice that a simple iteration allows us to step on the previous case:
\[\mathcal T_rq=\mathcal T_{k\hat{s}}\mathcal T_{r-k\hat{s}}q=(\mathcal T_{\hat{s}})^k\mathcal T_{r-k\hat{s}}q=q.\]
We prove therefore that there must exist a fixed point for $(\mathcal T_s)_{s\ge 0}$, which in other words means an invariant measure for the process $(X_s)_{s\ge 0}$.

The uniqueness of such an invariant measure follows from \eqref{contraction Lyapunov} . Indeed, when $\xi$ is taken to be distributed as another invariant measure  $\mathcal{L}(\xi)\not= q$, from \eqref{contraction Lyapunov} 
\[
\mathcal{W}_p(\mathcal{L}(\xi),q)=\mathcal{W}_p(\mathcal{L}(X_{1+s}^{\xi}),\mathcal{L}(X_{1+s}^{\xi'}))\le Ce^{-\frac{M_1}{p}s}\mathcal{W}_p(\mathcal{L}(\xi),q),
\]
and since this must hold for all $s\ge 0$, we arrive to the contradiction that $\xi'\sim \xi,$ concluding that way uniqueness of the invariant measure $q$. 

Moreover, if we choose $\xi'$ distributed as the invariant measure denoted by $q$ and we take test functions $\phi$ belonging to $\mathcal{S}_m$ defined in \eqref{SL}, display \eqref{W-moment_increment} implies after using H\"older's inequality that: 
\begin{align*}
\underset{\phi \in \mathcal{S}_m}{\sup}&\bigg|\int_{\RR^{d}}\phi(x') (p_{1+s}^1(x,dx')-q(dx'))\bigg|
= \underset{\phi \in \mathcal{S}_m}{\sup}\bigg|\int_{\RR^{d}} \phi(x')p_{1+s}^1(x,dx')-\int_{\RR^{d}} \phi(x'')q(dx'')\bigg|\\
&=\underset{\phi \in \mathcal{S}_m}{\sup}\big|\EE\big[\phi(X_{1+s}^{1,x})]-\EE[\phi(X_{1+s}^{1,\xi'})\big]\big|=\underset{\phi \in \mathcal{S}_m}{\sup}\big|\EE\big[\phi(X_{1+s}^{1,x})-\phi(X_{1+s}^{1,\xi'})\big]\big|\\
&\le\underset{\phi \in \mathcal{S}_m}{\sup}\EE\big[|\phi(X_{1+s}^{1,x})-\phi(X_{1+s}^{1,\xi'})|\big]\le \underset{\phi \in \mathcal{S}_m}{\sup}\big(	\EE\big[\big|\phi(X_{1+s}^{1,x})-\phi(X_{1+s}^{1,\xi'})\big|^m\big]\big)^{1/m}\\
&\le\big(\EE\big[W(X_{1+s}^{1,x}-X_{1+s}^{1,\xi'})\big]\big)^{1/m}\le e^{-M_1/m s}(\EE[W(x-\xi')])^{1/m}\\
&=e^{-\frac{M_1}{m} s}\bigg(\int_{\RR^{d}} W(x-x'')q(dx'')\bigg)^{1/m}.
\end{align*}
Hence, we proved that \eqref{WTVeq} holds.

We conclude the proof by illustrating the stopping time argument. Let us define the following increasing sequence of stopping times: $T_n:=\inf\{t\ge1\, :\, |X_t^{\xi}|>n \}\to \infty,\, n\to \infty.$  Notice that the stopped version of the integral process \eqref{integral_process} is a uniformly integrable martingale. For the process in \eqref{integral_process} to be a true martingale it is enough for the integrand to be square integrable. 
Since on one hand $\sigma$ is  bounded and on the other hand there exists $M_W>0$ such that $|\partial_xW(x)|\le M_W(1+ W(x)), \, \forall x\in \RR^d$ 
(see Assumption \ref{asex3c})
we get for all $t>1$:
\begin{align*}
\EE\bigg[\int^{t\wedge T_n}_1&\bigg(e^{\lambda s}\partial_xW(X_s^{1,\xi}-X_s^{1,\xi'})\big(\sigma(s,X_s^{1,\xi})-\sigma(s,X_s^{1,\xi'})\big)\bigg)^2ds\bigg]\\
&\le2M^2\EE\bigg[\int^{t}_1e^{2\lambda (s\wedge T_n)}|\partial_xW(X_{s\wedge T_n}^{1,\xi}-X_{s\wedge T_n}^{1,\xi'})|^2ds\bigg]\\
&\le 2M^2M_W^2\EE\bigg[\int^{t}_1e^{2\lambda (s\wedge T_n)}(1+W(X_{s\wedge T_n}^{1,\xi}-X_{s\wedge T_n}^{1,\xi'})^2)ds\bigg].
\end{align*}
Moreover, the stopped process defined from \eqref{integral_process} is a martingale and when applied to it, inequality \eqref{W-moment_increment} implies 
\begin{align*}
2M^2M_W^2\EE\bigg[\int^{t}_1e^{2\lambda (s\wedge T_n)}(1+W(X_{s\wedge T_n}^{1,\xi}-X_{s\wedge T_n}^{1,\xi'})^2)ds\bigg]\le C\EE\bigg[\int^{t}_1e^{2\lambda s}(1+W(\xi-\xi'))^2ds\bigg].
\end{align*}
Given this uniform in $n$  boundedness of the right hand side, when letting $n\to \infty$ in the inequality above  we conclude by Fatou's Lemma (allowing us to take the limit inside the expectation on the left hand side) that  
\begin{align*}
\EE\bigg[&\int^{t}_1\bigg(e^{\lambda s}\partial_xW(X_s^{1,\xi}-X_s^{1,\xi'})\big(\sigma(s,X_s^{1,\xi})-\sigma(s,X_s^{1,\xi'})\big)\bigg)^2ds\bigg]\\
&\le\underset{n\to \infty}{\liminf}\,\EE\bigg[\int^{t\wedge T_n}_1\bigg(e^{\lambda s}\partial_xW(X_s^{1,\xi}-X_s^{1,\xi'})\big(\sigma(s,X_s^{1,\xi})-\sigma(s,X_s^{1,\xi'})\big)\bigg)^2ds\bigg] \\
&\le  C\EE[W^2(\xi-\xi')]\int_1^te^{2\lambda s}ds<\infty.
\end{align*}
Hence, the stochastic integral in \eqref{integral_process} is a martingale. 
\end{proof}

\subsection{Result for autonomous SDEs}
We are now ready to combine the results from the previous sections and apply Theorem \ref{main} to the example falling under the set of Assumptions \ref{asex2a}--\ref{asex20}--\ref{asex3c}.

\begin{theorem}\label{thex1}

Suppose that Assumptions \ref{asex20}, \ref{asex3c} and \ref{asex2a}(a) or \ref{asex2b}(b) hold.  Assume moreover that there exists $c_W>0$ such that $c_W|x|^p\le W(x), \, \forall x\in \RR^d$ for the constant $p$ in Assumption \ref{asex3c}. Then for $n=1,2$  and for any $\phi \in \mathcal{S}_m$ defined in \eqref{SL}:
\[
|\partial_x^nV(s,x)|\le e^{(-M_1/m) s}h(x),\quad for\, all \, (s,x)\in(1,\infty)\times \RR^d,
\]
where for some $C,c>0$, we have 
\[
h(x)=
\begin{cases}
Ce^{c|x|^2}, \qquad \qquad if \quad p<m,\\
C(1+|x|^{p/m}) , \quad if \quad p\ge m;
\end{cases}
\] under Assumption \ref{asex2a}(a) or $h(x)= Ce^{c|x|^2}$ under Assumption \ref{asex2b}(b).
\end{theorem}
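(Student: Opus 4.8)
The plan is to assemble Theorem \ref{thex1} as a direct corollary of Theorem \ref{main}, by verifying that Assumptions \ref{as4}, \ref{as3} and \ref{as2} all hold for the family $\mathcal S_m$ with the quantitative data claimed in the statement. First I would fix $N=2$ and $\mathcal S = \mathcal S_m$ throughout. The strategy splits along the dichotomy in the hypothesis: under Assumption \ref{asex2a}(a) I invoke Lemma \ref{fried}, while under Assumption \ref{asex2b}(b) I invoke Lemma \ref{eild}; in either case these lemmas (given Assumption \ref{asex20}, which is among our hypotheses) supply Assumption \ref{as4} for some $\delta>0$ and Assumption \ref{as2} with the stated $h$, provided the input function $g$ satisfies $g(x)\le C(1+W(x))^{1/m}$ and provided $W$ has the Gaussian-integrability property $\sup_{c>0}\int_{\RR^d}|W(x)|e^{-c|x|^2}\,dx<\infty$. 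The latter is automatic since $W\in B_p(\RR^d)$ by Assumption \ref{asex3c}, so $W$ has polynomial growth and is integrated against any Gaussian. For the improved polynomial bound $h(x)=C(1+|x|^{p/m})$ when $p\ge m$ under Assumption \ref{asex2a}(a), I would quote the corresponding branch of Lemma \ref{fried} directly.

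Next I would produce Assumption \ref{as3} with $g$ and $G$ of the right shape by applying Lemma \ref{WTV}. Its hypotheses are exactly Assumption \ref{asex3c}, boundedness of $\sigma$ (which holds under either \ref{asex2a}(a), where $\sigma$ is bounded on $[0,1]$ and we have the global bound $M$, or \ref{asex2b}(b), where $|\sigma(t,x)|\le M$; here one should note that the Lyapunov-drift condition in Assumption \ref{asex3c} only needs $t>1$, so the relevant process $X^{1,\xi}$ is driven by coefficients on $(1,\infty)$), and the lower bound $c_W|x|^p\le W(x)$, which we have assumed. Lemma \ref{WTV} then yields a measure $q\in\mathcal P^W(\RR^d)$ and the bound
\[
\sup_{\phi\in\mathcal S_m}\Bigl|\int_{\RR^d}\phi(x')\bigl(p_{1+s}^1(x,dx')-q(dx')\bigr)\Bigr|\le \Bigl(e^{-M_1 s}\int_{\RR^d}W(x-x'')q(dx'')\Bigr)^{1/m}.
\]
So I set $G(s)=e^{-(M_1/m)s}$ and $g(x)=\bigl(\int_{\RR^d}W(x-x'')q(dx'')\bigr)^{1/m}$. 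The one genuine verification needed is that this $g$ obeys the growth bound $g(x)\le C(1+W(x))^{1/m}$ required by Lemmas \ref{fried}/\ref{eild}: since $W\in B_p(\RR^d)$ and $q\in\mathcal P^W(\RR^d)\subseteq\mathcal P_p(\RR^d)$, one has $\int W(x-x'')q(dx'')\le C\int(1+|x-x''|^p)q(dx'')\le C(1+|x|^p)\le C(1+W(x))$ up to adjusting constants, using $|x-x''|^p\le C(|x|^p+|x''|^p)$ and $\int|x''|^p q(dx'')<\infty$; and for the lower-bounded $W$ this is of the right form. This is the step I expect to be the mild technical crux — lining up the $g$ from the convergence lemma with the $g$ admissible in the density-estimate lemmas — but it is a routine polynomial-growth manipulation.

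With all three assumptions of Theorem \ref{main} in hand for $n\in\{1,2\}$, its conclusion gives $|\partial_x^n V(s,x)|\le h(x)G(s)=h(x)e^{-(M_1/m)s}$ for all $(s,x)\in(1,\infty)\times\RR^d$, with $h$ exactly as listed: under Assumption \ref{asex2a}(a) the case split $h(x)=Ce^{c|x|^2}$ if $p<m$ and $h(x)=C(1+|x|^{p/m})$ if $p\ge m$ coming from Lemma \ref{fried}, and under Assumption \ref{asex2b}(b) the uniform $h(x)=Ce^{c|x|^2}$ coming from Lemma \ref{eild}. I would close by simply remarking that $V(s,x)=\EE[\phi(X_s^x)]$ is well defined for $\phi\in\mathcal S_m\subseteq B(\RR^d)$ because such $\phi$ has at most polynomial growth of order $p$ (as $|\phi(x)|\le|\phi(0)|+W(x)^{1/m}$ and $W\in B_p(\RR^d)$) and the process has finite moments of every order under the stated coefficient assumptions, so Theorem \ref{main} applies verbatim. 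No step beyond the growth-matching of $g$ requires real work; the proof is essentially a bookkeeping assembly of Lemmas \ref{fried}, \ref{eild}, \ref{WTV} and Theorem \ref{main}.
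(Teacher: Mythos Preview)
Your proposal is correct and follows essentially the same approach as the paper: verify Assumptions \ref{as4}, \ref{as3} and \ref{as2} for $\mathcal S_m$ by invoking Lemma \ref{WTV} (to obtain $G(s)=e^{-(M_1/m)s}$ and a $g$ with polynomial growth of order $p/m$) together with Lemma \ref{fried} or Lemma \ref{eild} (to obtain the stated $h$), and then apply Theorem \ref{main}. Your identification of the growth-matching of $g$ as the only mildly technical step is exactly what the paper does, reducing $\int_{\RR^d}W(x-x'')q(dx'')$ to $C(1+|x|^p)$ via $W\in B_p(\RR^d)$ and $q\in\mathcal P^W(\RR^d)$.
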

\begin{proof}
By Lemma \ref{WTV}, we have seen that under  Assumption \ref{asex3c},  we have
\begin{align*}
\underset{\phi\in \mathcal{S}}{\sup}\Big|\int_{\RR^{d}}\phi(x')(p_{1+s}^1(x,dx')-q(dx'))\Big|
&\le  e^{-(M_1/m)s}\Big(\int_{\RR^{d}} W(x-x')q(dx')\Big)^{1/m}.
\end{align*}
And since
\[
\int_{\RR^d}W(x-x'')q(dx'')\le \int_{\RR^d}C(1+|x-x''|^{p})q(dx'') \le C(1+|x|^{p}),
\] we know that Assumption \ref{as3} is satisfied with $g(x)= C(1+ |x|^{p/m})$ and $G(s)=e^{-(M_1/m)s}$. 

Moreover, under Assumptions \ref{asex20} and  \ref{asex2a}(a), recall that Lemma \ref{fried} asserts that Assumption \ref{as2} is satisfied with $h(x)= Ce^{c|x|^2}$ since by Young's inequality:
\begin{align*}
\int_{\RR^d}(1+ |x''|^{p/m})Ce^{-c|x-x''|^{2}}dx''&\le\int_{\RR^d}(1+ |x''|^{p/m})Ce^{-c|x|^2-c|x''|^{2}+2c|x||x''|}dx''\\&\le\int_{\RR^d}(1+ |x''|^{p/m})Ce^{c|x|^2-c/2|x''|^{2}}dx''\\
&\le  Ce^{c|x|^2}.
\end{align*}
Now, if $p\ge m$ by a mere change of variables, we conclude on a similar fashion to Lemma \ref{fried} that the bound is achieved with $h(x)=C(1+|x|^{p/m})$.

Alternatively, under Assumptions \ref{asex20} and  \ref{asex2b}(b), in Lemma \ref{eild} we prove that  Assumption \ref{as2}(i) is satisfied with $h(x)=  Ce^{(\nu+c)|x|^2}$.

Moreover, Assumption \ref{as4} is proved under both  Assumptions \ref{asex2a}(a) and  \ref{asex2a}(b) again in Lemmas \ref{fried} and \ref{eild} respectively.

Finally, since Assumptions \ref{as3}, \ref{as4} and \ref{as2} are satisfied, Theorem \ref{main} gives us the claim.
\end{proof}
\subsection{Monotonic case}\label{partic_case}

In this section we present a more restrictive, although also more intuitive, set of assumptions for guaranteeing the conclusion of Theorem \ref{thex1}.  Instead of the abstract Lyapunov functions we work with in Theorem \ref{thex1}, we work directly with $x \mapsto |x|^2$.

\begin{assumption}[Monotonicity]\label{asex3d}
For any $(s,x,x_0)\in [0,\infty)\times\RR^d\times\RR^d$ there exist some  $ M_1 >0$  such that 
\[
\langle x-x_0,b(s,x)-b(s,x_0) \rangle +\frac{m-1}{2}|\sigma(s,x)-\sigma(s,x_0)|^2\le -M_1|x-x_0|^2.
\]	
\end{assumption}

Moreover, for some $m\in \NN$ such that $p\ge m\ge 2$, we take test functions within the $m^{th}$--order locally Lipschitz functions:
\begin{equation}\label{Smonotone}
\mathcal{S}_m':=\{\phi:\RR^d \to \RR \,|\, \phi \in B(\RR^d)\, \hbox{and} \,\exists C>0\, s.t\ |\phi(x)-\phi(y)|\le C(1+|x|^{m/2}+|y|^{m/2})|x-y|,\, \forall x,y \in\RR^d\}.
\end{equation}

We say more intuitive because the monotonicity and locally Lipschitz conditions are popular assumptions in the field. We also see next that this example is a subset of that presented up until now in this section. 
This means that,  as a particular case of Theorem \ref{thex1}, the same type of decaying in time estimates hold for this scenario.
\begin{corollary}\label{thex3}
Suppose that Assumptions \ref{asex20}, \ref{asex3d} and Assumption \ref{asex2a}(a) or \ref{asex2b}(b) hold.  Then for $n=1,2$  and for any $\phi \in \mathcal{S}_m'$ defined in \eqref{Smonotone}, the following holds:
\[
|\partial_x^nV(s,x)|\le e^{-M_1 s}h(x),\quad  \forall (s,x) \in(1,\infty)\times\RR^d;
\]
where for some $C>0$, $h(x)=C(1+|x|^m)$ under Assumption \ref{asex2a}(a) or $h(x)=C(1+|x|^m)e^{\nu|x|^2}$ under Assumption \ref{asex2b}(b).
\end{corollary}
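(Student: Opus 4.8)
The plan is to deduce this corollary from Theorem \ref{thex1} by showing that the monotonicity Assumption \ref{asex3d} together with the choice $W(x)=|x|^m$ satisfies the abstract Lyapunov Assumption \ref{asex3c}, and that the test-function class $\mathcal{S}_m'$ in \eqref{Smonotone} is contained in (a rescaling of) the class $\mathcal{S}_m$ in \eqref{SL}. Concretely, I would set $W(x)=|x|^m$; then $W\in B_m(\RR^d)$, $W\in C^2(\RR^d)$ away from the origin and in fact globally when $m\ge 2$, $W(x)\to\infty$ as $|x|\to\infty$, and $|\partial_x W(x)| = m|x|^{m-1}\le M_W(1+W(x))$ holds for a suitable $M_W$. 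With $W(x)=|x|^m$ and $m\ge 2$ one computes $\partial_x W(z) = m|z|^{m-2}z$ and $\partial_x^2 W(z) = m|z|^{m-2}I + m(m-2)|z|^{m-4}zz^*$, so for $z=x-x_0$,
\[
\bigl(L(s,x)-L(s,x_0)\bigr)W(x-x_0) = m|z|^{m-2}\Bigl(\langle z, b(s,x)-b(s,x_0)\rangle + \tfrac12\operatorname{tr}\bigl((\sigma(s,x)-\sigma(s,x_0))^*\bigl(I+(m-2)\tfrac{zz^*}{|z|^2}\bigr)(\sigma(s,x)-\sigma(s,x_0))\bigr)\Bigr).
\]
Bounding $\operatorname{tr}((\sigma(s,x)-\sigma(s,x_0))^* \tfrac{zz^*}{|z|^2}(\sigma(s,x)-\sigma(s,x_0)))\le |\sigma(s,x)-\sigma(s,x_0)|^2$ and using Assumption \ref{asex3d}, the bracket is $\le -M_1|z|^2$, hence $(L(s,x)-L(s,x_0))W(x-x_0)\le -mM_1|z|^m = -mM_1 W(x-x_0)$, giving Assumption \ref{asex3c} (with $M_1$ there replaced by $mM_1$, and $c_W=1$ in the coercivity condition $c_W|x|^p\le W(x)$ with $p=m$).

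Next I would verify the inclusion of test-function classes. If $\phi\in\mathcal{S}_m'$ then $|\phi(x)-\phi(y)|\le C(1+|x|^{m/2}+|y|^{m/2})|x-y|$; I would show this forces $|\phi(x)-\phi(y)|^m \le C' W_0(x-y)$ for $W_0(z)=|z|^m$ up to a constant, by splitting into the regime $|x-y|$ small (where $(1+|x|^{m/2}+|y|^{m/2})|x-y|$ is controlled) and $|x-y|$ large; more carefully, since $\phi$ is also bounded, $|\phi(x)-\phi(y)|\le 2\|\phi\|_\infty \wedge C(1+|x|^{m/2}+|y|^{m/2})|x-y|$, and one checks $(1+|x|^{m/2}+|y|^{m/2})|x-y| \le C''(1+|x-y|^{m/2})|x-y|$ using $|x|\le |y|+|x-y|$ type bounds, wait — this needs care since $|x|^{m/2}$ is not controlled by $|x-y|$ alone. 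The clean route is: $\phi\in\mathcal{S}_m'$ implies $|\phi(x)-\phi(y)|\le C(1+|x|^{m/2}+|y|^{m/2})|x-y|$, and combined with boundedness of $\phi$ one gets $|\phi(x)-\phi(y)|\le C(1+|x-y|^{m/2})|x-y|^{?}$... Actually the simplest correct statement is that elements of $\mathcal{S}_m'$ need not literally sit inside $\mathcal{S}_m$, so instead I would re-run the argument of Lemma \ref{WTV} directly: apply It\^o to $\bigl(e^{\lambda s}|X_s^{1,\xi}-X_s^{1,\xi'}|^m\bigr)$, use Assumption \ref{asex3d} exactly as there to get $\EE[|X_{1+s}^{1,\xi}-X_{1+s}^{1,\xi'}|^m]\le e^{-mM_1 s}\EE[|\xi-\xi'|^m]$, deduce existence and uniqueness of an invariant measure $q$ via the same Banach-fixed-point argument, and then for $\phi\in\mathcal{S}_m'$ bound, via Cauchy--Schwarz/H\"older,
\[
\bigl|\EE[\phi(X_{1+s}^{1,x})-\phi(X_{1+s}^{1,\xi'})]\bigr| \le C\,\EE\bigl[(1+|X_{1+s}^{1,x}|^{m/2}+|X_{1+s}^{1,\xi'}|^{m/2})\,|X_{1+s}^{1,x}-X_{1+s}^{1,\xi'}|\bigr] \le C\,\bigl(\EE[(1+\cdots)^2]\bigr)^{1/2}\bigl(\EE[|X_{1+s}^{1,x}-X_{1+s}^{1,\xi'}|^2]\bigr)^{1/2}.
\]
The first factor is a uniform-in-$s$ moment bound (finite since the law converges to $q\in\mathcal{P}^{W}$ and moments are propagated), and the second is $\le (e^{-2M_1 s}\EE[|x-\xi'|^2])^{1/2}$ by the $m=2$ case of the moment estimate (or by interpolation from the $m$-th moment bound); this yields Assumption \ref{as3} with $g(x)=C(1+|x|^{m/2})\cdot(1+|x|) \le C(1+|x|^m)$ (after enlarging, using $m\ge 2$) and $G(s)=e^{-M_1 s}$.

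Finally I would invoke Lemma \ref{fried} (under Assumption \ref{asex2a}(a)) or Lemma \ref{eild} (under Assumption \ref{asex2b}(b)) with this $W$ and $g$: since $W(x)=|x|^m\in B_m(\RR^d)$ and $g(x)\le C(1+W(x))^{1/1}$ fits the hypotheses (taking the exponent role appropriately, with $p=m$ so the ``$p\ge m$'' branch of Lemma \ref{fried} applies), these lemmas deliver Assumptions \ref{as4} and \ref{as2} with $h(x)=C(1+|x|^{p/m}\cdot\text{(growth of }g))= C(1+|x|^m)$ in the Friedman case and $h(x)=C(1+|x|^m)e^{\nu|x|^2}$ in the Eidelman case (the polynomial factor $1+|x|^m$ coming from $g$, the Gaussian factor from \eqref{f3}). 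With Assumptions \ref{as4}, \ref{as3}, \ref{as2} in hand, Theorem \ref{main} gives $|\partial_x^n V(s,x)|\le h(x)G(s) = h(x)e^{-M_1 s}$ for $n=1,2$ and all $(s,x)\in(1,\infty)\times\RR^d$, which is the claim. The main obstacle I anticipate is the bookkeeping in the second step — making precise the relation between $\mathcal{S}_m'$ and the Lyapunov function so that the moment-contraction argument of Lemma \ref{WTV} applies verbatim and produces the stated polynomial weight $h$; the It\^o/stopping-time and fixed-point parts are routine repetitions of Lemma \ref{WTV}, and the PDE input is a direct citation.
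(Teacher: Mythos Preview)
Your approach is correct and in fact more careful than the paper's own argument. The paper's proof is a two-line reduction to Theorem~\ref{thex1}: it remarks that the Lyapunov condition Assumption~\ref{asex3c} with $W(x)=|x|^{p}$ coincides with the monotonicity condition, shows that $\phi\in\mathcal S_m$ (for $W(x)=|x|^p$) implies $\phi\in\mathcal S'_{2(p/m-1)}$, and then invokes Theorem~\ref{thex1}. But as you noticed mid-proposal, the inclusion the paper writes down goes the wrong way for the deduction: to pull Corollary~\ref{thex3} back to Theorem~\ref{thex1} one would need $\mathcal S'_m\subset\mathcal S_{m'}$ for some $m'$, and this fails in general because the locally-Lipschitz bound $|\phi(x)-\phi(y)|\le C(1+|x|^{m/2}+|y|^{m/2})|x-y|$ depends on $|x|,|y|$ individually, not only on $|x-y|$.

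Your repair---verifying Assumption~\ref{asex3c} with $W(x)=|x|^m$ via the It\^o computation (which is clean and gives contraction rate $mM_1$), then re-running the argument of Lemma~\ref{WTV} directly for $\phi\in\mathcal S'_m$ with a Cauchy--Schwarz split into a uniform $m$-th-moment factor and a decaying second-moment factor---is exactly the right route. It yields $g(x)\le C(1+|x|^{m/2+1})\le C(1+|x|^m)$ (using $m\ge2$) and $G(s)=e^{-M_1 s}$, after which Lemmas~\ref{fried}/\ref{eild} and Theorem~\ref{main} give the stated $h(x)$. The only point to make explicit is the uniform-in-$s$ bound $\sup_{s\ge0}\EE[|X^{1,x}_{1+s}|^m]\le C(1+|x|^m)$ needed for the first Cauchy--Schwarz factor; this follows immediately from your contraction estimate by coupling with a copy started at $\xi'\sim q$ and using that $q$ has finite $m$-th moment. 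So your proposal closes a genuine gap in the paper's terse reduction, at the cost of repeating the Lemma~\ref{WTV} machinery once more.
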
	
\begin{proof}
First notice that when we have the Lyapunov condition Assumption \ref{asex3c} holding for $W(x)=|x|^{p}$, the monotonicity  condition is satisfied  automatically. Moreover, for $\phi\in \mathcal{S}_m$ with $\frac{p}{m}\ge 2$,
\begin{align*}
|\phi(x)-\phi(y)|&\le |x-y|^{\frac{p}{m}}=|x-y|^2|x-y|^{\frac{p}{m}-2}\le(|x|+|y|)|x-y||x-y|^{\frac{p}{m}-2}\\
&\le...\le (|x|^{\frac{p}{m}-1}+|y|^{\frac{p}{m}-1})|x-y|,
\end{align*}
i.e $\phi\in \mathcal{S}_m$ and $W(x)=|x|^{p}$ in Assumption \ref{asex3c}, implies $\phi\in \mathcal{S}_{2(p/m-1)}'$.
This and Theorem \ref{thex1} give the claim.
\end{proof}

Let us conclude with a few remarks on the relation in between the the family of test functions $\mathcal{S}_m$ and $\mathcal{S}_m'$. 
First, the family $\mathcal{S}_m$ is significantly wider than that of $\mathcal{S}_m'$. For one, if $W$ is a polynomial of order $p$, the ratio in between $p$ and $m$ will determine if the admissible test functions in $\mathcal{S}_m$ are either locally Lipschitz or locally H\"older, while the functions in $\mathcal{S}_m'$ must be locally Lipschitz.
If instead $W$ is polynomially growing only, then the test functions in $\mathcal{S}_m$ can have additionally a bounded component added to this locally Lipschitz or locally H\"older part. 

Moreover, although the locally Lipschitz condition seems more natural than the one presented through the Lyapunov function,  it doesn't give itself easily to generalization. Indeed, the family $\mathcal S_m$ can used with other than polynomial  Lyapunov functions  as long as one can obtain contraction in $W$--Weighted Total Variation distance and $W$ is still integrable against the bounds obtained for the derivatives of the transition density (see Appendix \ref{auxintuition}). 

\newpage
\section{Application to non--autonomous SDE decaying to an autonomous one}
\label{auxintuition}

Consider again the non--autonomous SDE \eqref{original}. Given the time--varying character of the coefficients, these processes do not easily have an invariant measure. Recall however that Assumption \ref{as3} was not asking for one, just for a static measure to which the transition probabilities ``stick''.

Additional intuition behind the result in Theorem \ref{main} could be extracted when noticing that  if there exist limiting functions $b_{\infty}(x):=\lim_{r\to \infty}b(x,r)$ and $\sigma_{\infty}(x):=\lim_{r\to \infty}\sigma(x,r)$, we can define the auxiliary process $Z$ solution to the widely studied autonomous SDE:
\begin{equation}\label{limiting}
dZ_s^{\tau,x}=b_{\infty}(Z_s^{\tau,x})ds+\sigma_{\infty}(Z_s^{\tau,x})dB_s;\,\,\, Z_{\tau}^{\tau,x}=x.
\end{equation}
We will formulate conditions so that $Z$ has an invariant measure $q$ which, given its autonomous character, are quite lax. Moreover, the transition densities of the process $X$ decay to $q$.
This together with the same regularity of $b, \sigma$ from Section \ref{non_autonomous} will let us verify Assumptions \ref{as2} and \ref{as3} and conclude decay of the derivatives in space of $V(t,x)=\EE[\phi(X_t^x)]$. Moreover,  we will show that unlike in Section \ref{non_autonomous}, the test functions $\phi$  are not restricted to polynomial growth but are rather controlled by a Lyapunov function which we specify later. 

In order to do so, we cannot longer use the Wasserstein metric to obtain  convergence to invariant measure of $Z$. The  Weighted Total Variation (WTV) distance (see \eqref{WTV_def} for the definition and  \cite[Theorem 8.9]{villani2008optimal,bellet2006ergodic} for more detail) proves to be more appropriate  in this setting. However, note that for polynomial weight functions of degree $p$, it is equivalent to the $p$-- Wasserstein metric (see \cite[Theorem 6.15]{villani2008optimal}). Following \cite{bogachev2019convergence}, we prove in Proposition \ref{WTV_for good} that under another type of Lyapunov condition (see Assumption \ref{asex3cc}), one obtains Assumption \ref{as3} by means of convergence in WTV.

Let us quantify now the above qualitative statements. A possible set of assumptions, in addition to Assumptions \ref{asex2a} and \ref{asex20} is the following.
\begin{assumption}[Time dependence of the coefficients]\label{asexaintuition}\label{coeff}
There exist $\RR^d\ni x \mapsto b_\infty(x)\in \RR^d$ and $\RR^d\ni x \mapsto \sigma_\infty(x)\in \RR^{d\times d}$  measurable such that 
\begin{equation*}
b_{\infty}(x):=\lim_{r\to \infty}b(x,r)\quad \hbox{and} \quad \sigma_{\infty}(x):=\lim_{r\to \infty}\sigma(x,r),\quad   \forall(t,x)\in(0,\infty)\times\RR^d.
\end{equation*}
\end{assumption}
Notice that as a consequence of this and Assumptions \ref{asex2a} and \ref{asex20}, there exists  $L_{\infty}(x):=\lim_{r\to \infty}L(r,x), \forall x\in \RR^d$ which also inherits regularity and uniform ellipticity from $L$.

\begin{assumption}[Lyapunov function] \label{asex3cc}
There exists a  function $W: \RR^d\to [1,\infty)$ such that $\underset{|x| \to \infty}{\lim}W(x)=\infty$ and  it satisfies for some $M_2,>0$ that 
\begin{equation}\label{L_first}
\underset{t\ge 0}{\sup}\{|b(t,x)|,|\sigma(t,x)|\}\le M_2W(x), \quad \forall x\in \RR^d.
\end{equation}
Additionally, we assume $W\in C^{2}(\RR^d)$ and there exists a constant $M_1>0$ such that $\forall t\ge 0, \,x \in \RR^d,$
\begin{equation}\label{L}
L(t,x)W(x)\le -M_1W(x).
\end{equation}
\end{assumption}
We can see that \eqref{L_first} and \eqref{L} also hold with $b,\sigma,L$ replaced by $b_{\infty},\sigma_{\infty},L_{\infty}$.  Hence, \eqref{limiting} has a unique weak solution by \cite[Theorem 2.4]{gyongy1996existence}.

Moreover, we work with the family of test functions:
\begin{equation}\label{SLL}
\mathcal{S}:=\Big\{\phi:\RR^d \to \RR \,|\, \phi \in B(\RR^d) \, \hbox{and} \, \underset{x\in \RR^d}{\sup}\frac{|\phi(x)|}{W(x)}<\infty \Big\}.
\end{equation}

\begin{remark}
Although this set of assumptions might seem restrictive at first sight, it is more general than what is available in the current  literature. Indeed, in addition to the generalization to the non--autonomous dynamics, when compared to \cite{talay1990second}, we notice one extra advantage of the main results of this section, Theorem \ref{thex1_for_good}: there is no need for smoothness and boundedness of all derivatives for the coefficients. In other words, even when discussing the time decay of the space derivatives of $(t,x)\mapsto \EE[\phi(Z_t^{0,x})]$, our results hold under weaker assumptions over the coefficients $b_\infty, \sigma_\infty$.
\end{remark}

Next we state a technical lemma which gives a priori uniform estimates ensuring well--posedness and is used to make the jump form the auxiliary autonomous process $Z$ to the original non--autonomous process $X$.
\begin{lemma}\label{WTVmoments}
Let Assumption \ref{asex3cc} hold and $\sigma$ be bounded. Then, for all $\tau\ge0$ and  $\xi$ satisfying $\EE[W(\xi)]<\infty$,  the processes $(X_{\tau+s}^{\tau,\xi})_{s\ge0}$, $(Z_{\tau+s}^{\tau,\xi})_{s\ge0}$ solutions to \eqref{original} and  \eqref{limiting} respectively, satisfy the following: 
\[
\EE\big[W(X_{\tau+s}^{\tau,\xi})\big]\le  e^{-M_1 s}\EE[W(\xi)]\quad \hbox{and} \quad \EE\big[W(Z_{\tau+s}^{\tau,\xi})\big]\le  e^{-M_1 s}\EE[W(\xi)], \qquad \forall s>0.
\]
\end{lemma}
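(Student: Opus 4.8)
The plan is to apply Itô's formula to the process $s \mapsto e^{M_1 s} W(X_{\tau+s}^{\tau,\xi})$ (and analogously for $Z$), use the Lyapunov inequality \eqref{L} from Assumption \ref{asex3cc} to kill the drift term, and then take expectations to obtain a differential inequality whose integration yields the claimed exponential decay. The boundedness of $\sigma$ together with the growth control $|\sigma(t,x)| \le M_2 W(x)$ and $W \in C^2$ should ensure the stochastic integral has zero expectation once the standard localisation is carried out.

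\begin{proof}
We carry out the argument for $X$; the argument for $Z$ is identical with $b, \sigma, L$ replaced by $b_\infty, \sigma_\infty, L_\infty$, which satisfy the same bounds as remarked after Assumption \ref{asex3cc}. Fix $\tau \ge 0$ and $\xi$ with $\EE[W(\xi)] < \infty$. For $n \in \NN$ define the stopping time $T_n := \inf\{s \ge 0 : |X_{\tau+s}^{\tau,\xi}| > n\}$, which increases to $+\infty$ almost surely since the process has continuous paths. Applying Itô's formula to $s \mapsto e^{M_1 s} W(X_{\tau+s}^{\tau,\xi})$ up to the time $s \wedge T_n$ and using that $\partial_s$ acts only on the exponential, we obtain, writing $X_r$ for $X_{\tau+r}^{\tau,\xi}$,
\begin{align*}
e^{M_1 (s\wedge T_n)} W(X_{s\wedge T_n}) &= W(\xi) + \int_0^{s\wedge T_n} e^{M_1 r}\big(M_1 W(X_r) + L(\tau+r, X_r) W(X_r)\big)\,dr \\
&\quad + \int_0^{s\wedge T_n} e^{M_1 r}\,\partial_x W(X_r)\,\sigma(\tau+r,X_r)\,dB_r.
\end{align*}
By \eqref{L} the integrand of the $dr$-term is $\le e^{M_1 r}(M_1 W(X_r) - M_1 W(X_r)) = 0$, so the Lebesgue integral is nonpositive. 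The stopped stochastic integral is a genuine martingale of zero expectation: on $[0, s\wedge T_n]$ the process is bounded by $n$, $W$ and $\partial_x W$ are continuous hence bounded on $\{|x|\le n\}$, and $\sigma$ is bounded, so the integrand is bounded. Taking expectations thus gives
\[
e^{M_1 s}\,\EE\big[W(X_{s\wedge T_n})\,\ind_{\{T_n > s\}}\big] \le e^{M_1 (s\wedge T_n)}\,\EE\big[W(X_{s\wedge T_n})\big] \le \EE[W(\xi)].
\]
Now let $n \to \infty$. Since $T_n \to \infty$ a.s., $W \ge 1$, and $W(X_{s\wedge T_n})\ind_{\{T_n>s\}} \to W(X_s)$ a.s., Fatou's lemma yields
\[
e^{M_1 s}\,\EE\big[W(X_{\tau+s}^{\tau,\xi})\big] \le \EE[W(\xi)], \qquad \forall s > 0,
\]
which is the first claimed estimate after dividing by $e^{M_1 s}$. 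Replacing $b, \sigma, L$ by $b_\infty, \sigma_\infty, L_\infty$ and repeating verbatim gives the bound for $Z_{\tau+s}^{\tau,\xi}$, and the fact that $\sigma_\infty$ is bounded (being a pointwise limit of the uniformly bounded $\sigma(t,\cdot)$, or directly bounded under Assumption \ref{asex2a}) makes the localisation argument go through identically. This completes the proof.
\end{proof}

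The one point requiring care — and the only real obstacle — is the justification that the stopped stochastic integral has zero expectation and that the limit can be passed through the expectation; both are handled cleanly by the localisation through $T_n$ together with Fatou's lemma and $W \ge 1$, exactly as in the stopping-time argument already spelled out at the end of the proof of Lemma \ref{WTV}. Everything else is a direct consequence of the Lyapunov inequality \eqref{L}.
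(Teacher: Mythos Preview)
Your proof is correct and follows essentially the same approach as the paper's: both apply It\^o's formula to $e^{M_1 s}W(X_{\tau+s}^{\tau,\xi})$, use the Lyapunov inequality \eqref{L} to make the drift nonpositive, and then pass to expectations, with the paper simply referring the localisation argument back to the end of the proof of Lemma~\ref{WTV} while you spell it out explicitly. One small notational slip: in your displayed chain of inequalities the middle expression should be $\EE\big[e^{M_1(s\wedge T_n)}W(X_{s\wedge T_n})\big]$ with the exponential inside the expectation (since $T_n$ is random), after which the first inequality follows by dropping the nonnegative contribution on $\{T_n\le s\}$.
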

\begin{proof}
First notice that, given conditions on $W$ presented in Assumption \ref{asex3cc},  $L(t,x)W(x)\le -M_1W(x)$.
An analogous computation to Lemma \ref{WTV} applying  It\^o's formula  to the process $(e^{\lambda s}W(X_{s+\tau}^{\tau,\xi}))_{s\ge0}$ for arbitrary $\lambda\in \RR$, leads to:
\begin{align*}
d\big(e^{\lambda s}\EE{\big[W(X_{s+\tau}^{\tau,\xi})\big]}\big)\le e^{\lambda s}(\lambda-M_1)\EE{\big[W(X_{s+\tau}^{\tau,\xi})\big]}\,ds.
\end{align*} 
Hence, for $\lambda=M_1$ and for all $s>0$, after integrating the previous inequality, we obtain the claimed bound: 
$$\EE{\big[W(X_{s+\tau}^{\tau,\xi})\big]}\le \EE[W(\xi)]e^{-M_1s}.$$
Moreover, since as a consequence of Assumption \ref{coeff}, inequality \eqref{L} is also satisfied by the limiting generator $L_\infty$, we have the analogous is satisfied also by $Z$:
$$\EE{\big[W(Z_{s+\tau}^{\tau,\xi})\big]}\le \EE[W(\xi)]e^{-M_1s}.$$
\end{proof}

Recall that the Weighted Total Variation norm and distances, represented by $||\cdot||_W$ and $d_W$, are stated in \eqref{WTV_def}.
\begin{proposition}\label{WTV_for good}
Let Assumptions \ref{asex2a}, \ref{asex20}, \ref{coeff} and \ref{asex3cc} hold and consider $\xi$ a random variable such that  $\LL(\xi)\in \mathcal{P}^W(\RR^d)$.  Let us denote for any $x\in \RR^d$ the density of $(Z_{1+s}^{1,x})_{s\ge0}$ (solution to \eqref{limiting}) by $\RR^d \ni x''\mapsto p_{1+s}^1(x,x'')$. Then, $(Z_{1+s}^{1,\xi})_{s\ge0}$  has an invariant measure $q\in \mathcal{P}^W(\RR^d)$ and  the following is satisfied for  some $C,c>0$ :
\begin{align}\label{contraction Lyapunov_in_weighted}
d_W(\LL(Z_{1+s}^{1,\xi}),q)&\le Ce^{-c s}d_W(\LL(\xi),q).
\end{align}
Moreover, for any $\mathcal{S}$ defined by \eqref{SLL}, there exist $C,c>0$ such that
\begin{equation}
\underset{\phi\in \mathcal{S}}{\sup}\bigg|\int_{\RR^{d}}\phi(x')(p_{1+s}^1(x,dx')-q(dx'))\bigg|\le  Ce^{-c s}\int_{\RR^{d}} W(x-x'')q(dx''),\forall \,\, (s,x)\in[1,\infty)\times \RR^d.
\end{equation}
\end{proposition}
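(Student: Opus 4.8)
The goal is to establish a Harris-type contraction in the weighted total variation distance for the limiting autonomous process $Z$ and then transfer it into the stated bound on test functions. The plan is to follow the program of \cite{bogachev2019convergence}: first verify that the semigroup associated to \eqref{limiting} satisfies a Lyapunov-drift condition and a local minorization (Doeblin) condition, then invoke the abstract Harris theorem to get existence of a unique invariant measure $q$ together with the exponential contraction \eqref{contraction Lyapunov_in_weighted}, and finally deduce the last display by duality between $d_W$ and suprema over $\mathcal S$.

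First I would record the drift condition. Lemma \ref{WTVmoments}, applied to $Z$, already gives $\EE[W(Z_{1+s}^{1,x})]\le e^{-M_1 s}W(x)$ for deterministic start $x$; integrating in $x$ against $\LL(\xi)$ and using $\LL(\xi)\in\mathcal P^W(\RR^d)$ shows $\mathcal T_s \LL(\xi)\in\mathcal P^W(\RR^d)$ for all $s$, and in particular the orbit stays in the set $\{W\le R\}$ ``most of the time''. This is the contractive Lyapunov ingredient. Next I would establish the minorization: on any sublevel set $\{W\le R\}$ there exist $t_0>0$ and $\beta>0$ and a probability measure $\nu$ with $p_{1+t_0}^{1}(x,\cdot)\ge \beta\,\nu(\cdot)$ for all $x$ with $W(x)\le R$. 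This is where Assumptions \ref{asex2a} and \ref{asex20} enter: the coefficients $b_\infty,\sigma_\infty$ inherit $C^{1,2}_b$ regularity on compacts and uniform ellipticity from $b,\sigma$ and $L$, so by the Gaussian lower bounds for the transition density (Lemma \ref{fried}, or the classical Aronson/Friedman estimates in \cite{friedman2008partial}) the density $p_{1+t_0}^1(x,x'')$ is bounded below by a positive continuous function on the relevant compacts, yielding the minorization with an explicit $\nu$.

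With these two conditions in hand, the abstract weighted Harris theorem (see \cite{hairer2011yet} or \cite{bogachev2019convergence}) produces a unique invariant measure $q\in\mathcal P^W(\RR^d)$ and constants $C,c>0$ with
\[
d_W\bigl(\mathcal T_s\LL(\xi),\,q\bigr)\le C e^{-c s}\,d_W\bigl(\LL(\xi),q\bigr),\qquad s\ge 0,
\]
which is \eqref{contraction Lyapunov_in_weighted} after reindexing time by $1$. For the final inequality, I would fix $x\in\RR^d$, take $\xi=x$ deterministic, and note that for $\phi\in\mathcal S$ one has $|\phi|\le C_\phi W\le C_\phi(1+W)$, so by the definition \eqref{WTV_def} of the WTV norm,
\[
\Bigl|\int_{\RR^d}\phi(x')\bigl(p_{1+s}^1(x,dx')-q(dx')\bigr)\Bigr|
\le C_\phi\, d_W\bigl(\LL(Z_{1+s}^{1,x}),q\bigr)
\le C_\phi\,C e^{-c s}\,d_W\bigl(\delta_x,q\bigr),
\]
and $d_W(\delta_x,q)=\int_{\RR^d}(1+W(x-x''))\,q(dx'')\le C\int_{\RR^d}W(x-x'')q(dx'')$ since $W\ge 1$; absorbing $C_\phi$ into the supremum-normalized constant (or, more carefully, noting $\mathcal S$ should be taken with the normalization $\sup_x|\phi|/W(x)\le 1$ so that $C_\phi$ is universal) gives the claim.

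The main obstacle I anticipate is the minorization step: one must be careful that the Gaussian lower bound on $p_{1+t_0}^1(x,\cdot)$ is uniform over $x$ in the sublevel set $\{W\le R\}$ and that the resulting minorizing measure $\nu$ does not degenerate — this requires that the coefficients of $Z$ be genuinely uniformly elliptic and bounded-in-regularity on compacts, which is exactly what Assumptions \ref{asex20} and \ref{asex2a} (transmitted to $b_\infty,\sigma_\infty,L_\infty$ via Assumption \ref{coeff}) provide. A secondary technical point is checking that the drift condition \eqref{L}, which is stated pointwise for $LW$, integrates correctly against the semigroup to give the multiplicative contraction of $W$-moments needed by Harris's theorem; this is handled exactly as in Lemma \ref{WTVmoments} via It\^o's formula with the stopping-time localization argument already used there.
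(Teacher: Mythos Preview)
Your proposal is correct and follows the same overall architecture as the paper's proof: verify a Lyapunov drift condition (via Lemma \ref{WTVmoments}), establish a local minorization on sublevel sets of $W$, invoke the weighted Harris theorem of \cite{hairer2011yet} to obtain both the invariant measure $q$ and the exponential contraction in $d_W$, and then deduce the test-function bound by specializing to $\xi=x$ and using $|\phi|\le C(1+W)$.

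The one substantive difference is in how the minorization is obtained. You propose using two-sided Gaussian (Aronson/Friedman) bounds on the transition density, relying on the coefficients being bounded and regular on compacts. The paper instead uses Harnack's inequality from \cite[Theorem 8.2.1]{bogachev2015fokker} to compare $p_{1+\hat s}^1(x,x'')$ against $p_{1+\hat s/2}^1(x,0)$, and then a separate short argument (testing against smooth cutoffs of the level set) to show $\min_{|x|\le R}p_{1+\hat s/2}^1(x,0)>0$. The Harnack route is more robust here: under Assumption \ref{asex2a}(b) the drift $b_\infty$ may have linear growth, and the classical Gaussian lower bounds you cite typically require globally bounded coefficients, not merely boundedness on compacts (the process can exit $\{W\le R\}$ before time $t_0$, so local boundedness of the coefficients does not by itself control the density from below). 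Your approach works cleanly under Assumption \ref{asex2a}(a), but in the general setting of the proposition you would need either a localization/comparison argument or to fall back on the Harnack inequality as the paper does. This is not a fatal gap, but it is a point where the paper's argument is doing slightly more work than your sketch acknowledges.
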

\begin{proof}
We follow the argument from \cite{bogachev2019convergence}.

\textbf{Step 1:} For the purposes of this proof let us introduce the semigroup $(\mathcal T_{1+s})_{s\ge0}$ defined by the generator $C^2(\RR^d)\ni u \mapsto L_\infty u=\frac{1}{2} \text{tr}\big(\sigma_\infty\sigma_\infty^*\partial_x^2u\big)+b_\infty\partial_xu$. Let us also assume for now that there exists an invariant measure $q\in\mathcal{P}^W(\RR^d)$, and we will prove this claim later.  We know that $(\mathcal T_{1+s})_{s\ge0}$  indeed exists, is unique and is a Markov semigroup on $L^1(\LL(\xi))$ (see \cite[ Theorem 5.2.2,
Proposition 5.2.5 and Example 5.5.1]{bogachev2015fokker}). By \cite[Theorem 6.4.7]{bogachev2015fokker} we know that there exists a positive continuous function $(t,x,x'')\mapsto \rho_{1+t}(x,x'')$ such that for any $\psi\in L^1(q)$ the following identity holds: $\mathcal T_{1+t}\psi(x)=\int_{\RR^d}\rho_{1+t}(x,x'')\psi(x'')dx'', \, \forall x\in \RR^d$. Moreover, for any fixed $x\in \RR^d$, it satisfies $\partial_t\rho_{1+t}(x,x'')=L^*_\infty(x,x'')\rho_{1+t}(x,x'')$ for all $t\ge0, x''\in \RR^d$.

We know moreover that fixed $x\in \RR^d$, the density $(s,x'')\mapsto p_{1+s}^1(x,x'')$ satisfies the Fokker--Planck equation:
\[
\partial_tp_{1+s}^1(x,x'')=L^*_\infty(x,x'')p_{1+s}^1(x,x'') ;  \quad p_{1}^1(x,x'')=\delta_x(x''), \,\, s\ge0, x''\in \RR^d.
\]
By uniqueness of solution under our regularity assumptions on the coefficients we know that $\rho_{1+s}(x,x'')=p_{1+s}^1(x,x'')$ and therefore for any $\psi\in L^1(q)$, $\mathcal T_{1+t}\psi(x)=\int_{\RR^d}p_{1+t}^1(x,x'')\psi(x'')dx'', \, \forall x\in \RR^d, t\ge 0$.  Moreover, its dual is $\mathcal T_{1+t}^*\sigma(x'')=\int_{\RR^d}p_{1+t}^1(x,x'')\sigma(dx), \, \forall x''\in \RR^d, t\ge 0$ and $\sigma \in \mathcal{P}(\RR^d)$.

Notice next that by Lemma \ref{WTVmoments} we know that for all $(s,x)\in (0,\infty)\times\RR^d $:
\begin{equation}\label{T_contraction}
\mathcal T_{1+s}W(x)=\int_{\RR^d}p_{1+s}^1(x,x'')W(x'')dx''\le W(x)e^{-M_1 s}.
\end{equation}

\textbf{Step 2:} We will verify that we can apply Harris Ergodic Theorem. 

First, due to \eqref{T_contraction}, the function $W:X\to [0,\infty)$ and  $M_1>0$ satisfy 
\begin{equation}\label{prop1}
\int_{\RR^d}W(y)p_{1+s}^1(x,dy)\le e^{-M_1s} W(x), \quad \forall (s,x)\in(0,\infty)\times \RR^d.
\end{equation}

Second, recall that $|b_\infty(x)|\le\underset{|y|\le |x|}{\max}W(y) $ for all $y\in \RR^d$.
Then, for any fixed $R>0$  and any fixed $\hat s>0$,  Harnack's inequality  \cite[Theorem 8.2.1]{bogachev2015fokker} gives $\kappa(\hat s)>0$ such that
\begin{equation}\label{preprop2}
p_{1+\hat s}^1(x,x'')\ge \underset{|x|\le R}{\min}\,p_{1+\hat s/2}^1(x,0)\exp\big({-\kappa(\hat s)(1+(\underset{|y|\le |x''|}{\max}W(y))^2+|x''|^2)}\big),\quad \forall x''\in \RR^d, \,\, |x|\le R.
\end{equation}

Let $m(R):=\underset{|x|\le R}{\min}p_{1+\hat s/2}^1(x,0)$. Next we will show that $m(R)>0$. Indeed, following \cite[Proof of Lemma 3.5]{bogachev2019convergence}, let us consider $\psi\in C^{\infty}(\RR^d)$ with compact support such that $\psi(y)=1$ if $|y|\le 2R$ and $\psi(y)=0$ if $|y|>3R$ and let $|x|\le R$. For $t\ge 0$ we see that:
\[
\int_{\RR^d}\psi(y)p_{1+t}^1(x,dy)=\psi(x)+ \int_0^t\int_{\RR^d}L_\infty\psi(y)p_{1+s}^1(x,dy)ds.
\]
Consequently, 
\[
\int_{\RR^d}\psi(y)p_{1+t}^1(x,dy)\ge 1-t\underset{y}{\sup}|L_\infty\psi(y)|.
\]
Now by choosing $t$ small, we have that $\int_{\RR^d}\psi(y)p_{1+t}^1(x,dy)\ge 1/2$ for all $|x|\le R$. Since this holds for any such test function $\psi$, we conclude that $\underset{|y|\le 3R}{\min}p_{1+t}^1(x,y)\ge 1/2$ for all $|x|\le R$. And choosing $\hat s$ smaller if necessary, we can assume that $t=\hat s/4$ and applying Harnack's inequality again to prove that there exists $C>0$ such that  $1/2\le Cp_{1+\hat s/2}^1(x,0)$ or, in other words, $m(R)>0$ for every $R>0$.

Notice then that by \eqref{preprop2}, there exists $k\in (0,1)$ such that if $R>1/(1-e^{-M_1\hat s})$, then the probability measure $$\mu(dy)=\frac{1}{k}m(R)\exp\big({-\kappa(\hat s)(1+(\underset{|x''|\le 2|y|}{\max}W(x''))^2+|y|^2)}\big)dy$$ satisfies
\begin{equation}\label{prop2}
	\underset{x:W(x)\le R}{\inf}{p_{1+\hat s}^1(x, \cdot)}\ge k\mu.
\end{equation}

Then, by \eqref{prop1} and \eqref{prop2}, we can apply the Harris Ergodic Theorem \cite[Theorem 1.3]{hairer2011yet} and conclude the existence of two numbers, $\tilde\beta\in(0,1)$ and $\hat{\beta}>0$, such that for every two probability measures on $\RR^d$, $\mu_1,\mu_2$, 
\begin{equation}\label{contraction_hat}
\Big|\Big|\int_{\RR^d}p_{1+\hat s}^1(x,\cdot)\mu_1(dx)-\int_{\RR^d}p_{1+\hat s}^1(x,\cdot)\mu_2(dx)\Big|\Big|_{\hat{\beta}W-1}\le \tilde\beta||\mu_1-\mu_2||_{\hat{\beta}W-1}.
\end{equation}

Next, we apply repeatedly \eqref{contraction_hat} for two initial laws $\mathcal{L}(\xi),\mathcal{L}(\xi')\in \mathcal{P}^W(\RR^d)$ and  $(\mathcal T^* _{1+s})_{s\ge0}$. This allows us to conclude that for any $n\in \NN$ and fixed $T> 0$,  there exist  $\hat{\beta}>0,\tilde\beta\in(0,1)$ such that
\begin{equation*}
	||\mathcal T^*_{1+nT}(\LL(\xi))-\mathcal T^*_{1+nT}(\LL(\xi'))||_{\hat{\beta}W-1}\le \tilde\beta^n||\LL(\xi)-\LL(\xi')||_{\hat{\beta}W-1}.
\end{equation*}
In particular, since
\begin{align*}
	||\mathcal T^*_{1+nT}(\LL(\xi))-\mathcal T^*_{1+nT}(\LL(\xi'))||_{W}
	&\le2||\mathcal T^*_{1+nT}(\LL(\xi))-\mathcal T^*_{1+nT}(\LL(\xi'))||_{W-1}\\
	&=2\hat{\beta}^{-1}||\mathcal T^*_{1+nT}(\LL(\xi))-\mathcal T^*_{1+nT}(\LL(\xi'))||_{\hat{\beta}W-1}\le 2\hat{\beta}^{-1}\tilde\beta^n||\LL(\xi)-\LL(\xi')||_{\hat{\beta}W-1}
	\\
	&\le 2\tilde\beta^n||\LL(\xi)-\LL(\xi')||_{W},
\end{align*}
we conclude 
\begin{equation}\label{n_cte_first}
	||\mathcal T^*_{1+nT}(\LL(\xi))-\mathcal T^*_{1+nT}(\LL(\xi'))||_{W}\le  C\tilde\beta^n||\LL(\xi)-\mathcal \LL(\xi')||_W.
\end{equation}

Moreover, for any function $\psi\in C^{\infty}_0(\RR^d)$ such that $|\psi|\le CW$ and $0\le t< T$, we have $|\mathcal T_{1+t}\psi|\le \mathcal T_{1+t}|\psi|\le C\mathcal T_{1+t}W\le 2CW.$	Hence, 
\[
\int_{\RR^d}\psi(x'')(\mathcal T_{1+t}^*(\LL(\xi))-\mathcal T_{1+t}^*(\LL(\xi')))(dx'')=\int_{\RR^d}(\mathcal T_{1+t}\psi)(x'')(\LL(\xi)-\LL(\xi'))(dx'')\le 2C ||\LL(\xi)-\LL(\xi')||_W.
\]
Since $T>0$ was arbitrary, this and \eqref{n_cte_first} imply that there exist $C,c>0$ such that for all $s>0$,
\begin{equation}\label{n_cte}
	||\mathcal T^*_{1+s}(\LL(\xi))-\mathcal T^*_{1+s}(\LL(\xi'))||_{W}\le  Ce^{-cs}||\LL(\xi)-\mathcal \LL(\xi')||_W.
\end{equation}
Now,  as long as we have existence of an invariant measure $q$, applying this with $\xi'\sim q$ allows us to conclude \eqref{contraction Lyapunov_in_weighted}.

\textbf{Step 3:} Now, since  $(\mathcal{P}^W(\RR^d), ||\cdot||_{ W})$ is a complete metric space \cite[proof of Theorem 3.2]{hairer2011yet}, we can use the contraction \eqref{contraction Lyapunov_in_weighted} (for $s$ big enough such that $Ce^{-cs}<1$) to carry out a similar argument to that presented in Proposition \ref{W-moment_increment}. Namely we obtain, by Banach Fixed Point theorem, the existence of an invariant measure $q\in \mathcal{P}^W(\RR^d)$. 

\textbf{Step 4:} Finally,  we apply \eqref{contraction Lyapunov_in_weighted} with the initial condition $\xi=x\in \RR^d$ (a.s) and  the fact that any $\phi\in\mathcal{S}$ satisfies for some $C>0$ that $\phi\le C(1+W)$, in order to obtain for some $C,c>0$:
\begin{align*}
\underset{\phi \in \mathcal{S}_m}{\sup}\bigg|\int_{\RR^{d}}\phi(x') (p_{1+s}^1(x,dx')-q(dx'))\bigg|&\le	\underset{\phi \in \mathcal{S}_m}{\sup}\int_{\RR^{d}}|\phi(x')| \big|p_{1+s}^1(x,dx')-q(dx')\big|\le Cd_{W}(p_{1+s}^1(x,\cdot), q(\cdot))\\
&\le Ce^{-cs}d_{W}(\delta_x, q)= Ce^{-c s}\int_{\RR^{d}}( 1+ W(x-x''))q(dx'').
\end{align*}
Noticing that $W\ge 1$, we conclude from this the final statement.
\end{proof}
As a final remark, let us mention that there are even weaker alternative Lyapunov conditions allowing us to conclude convergence in WTV. An option is assuming that there exist a function $W(x): \RR^d\to [1,\infty)$, constants $M_1>0$, $M_2\in\RR$ and a compact set $K\subseteq \RR^d$ with index function $\textbf{1}_{K}$,  such that $\forall t\ge 0, \,x \in \RR^d,$
\[
L(t,x)W(x)\le M_2\textbf{1}_{K}-M_1W(x).
\]
Then we have exponential decay to an invariant measure in Weighted Total Variation distance (see \cite[Theorem 8.7]{bellet2006ergodic}) for test functions in $\mathcal{S}$ defined by \eqref{SLL}.

Next, we state the main theorem of this section: another example of time decaying derivative estimates for $V(t,x)=\EE[\phi(X_t^x)]$, where $X$ is solution to \eqref{original} and $\phi\in \mathcal S$ defined by \eqref{SLL}.
\begin{theorem}\label{thex1_for_good}

Suppose that Assumptions \ref{asex20}, \ref{asex2a}, \ref{coeff}  and \ref{asex3cc} hold.  Assume moreover that there exists $C_W>0$  such that  $\int_{\RR^d}|W(x)|e^{-c|x|^2}dx\le C_W$, where $c>0$ is any of the constants in  \eqref{f1},\eqref{f2}, \eqref{f3}. Then, there exists a measure $q\in \mathcal{P}^W(\RR^d)$ such that, for every $x\in \RR^d,s\ge 0$, it is a static measure for $Z_{1+s}^{1,x}$ solution to (\ref{limiting}). Moreover,  for $n=1,2$, $\phi \in \mathcal{S}$ defined in \eqref{SLL}, there exist $C,c>0$ such that:
\[
|\partial_x^nV(s,x)|\le C e^{-cs}\int_{\RR^d}\int_{\RR^d}W(x''-x')q(dx')e^{-c|x-x''|^{2}}dx'',\quad for\, all \,\, (s,x)\in[1,\infty)\times \RR^d.
\]
\end{theorem}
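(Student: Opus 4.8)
The plan is to verify Assumptions \ref{as4}, \ref{as3} and \ref{as2} for the process $X$ solving \eqref{original} with test functions from the family $\mathcal S$ of \eqref{SLL}, and then to invoke Theorem \ref{main}; the function $h$ will come out as $h(x)=C\int_{\RR^d}\int_{\RR^d}W(x''-x')q(dx')e^{-c|x-x''|^2}dx''$, finite for every $x$ by the Gaussian-integrability hypothesis on the Lyapunov function $W$ (after a Young splitting of the exponent $|x-x''|^2$). Here $q$ denotes the invariant measure of the limiting autonomous process $Z$ of \eqref{limiting}, whose existence in $\mathcal P^W(\RR^d)$ is guaranteed by Proposition \ref{WTV_for good}.

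First I would settle Assumption \ref{as4} and the regularity and integrability content of Assumption \ref{as2}. Under Assumptions \ref{asex20} and \ref{asex2a}, Friedman's estimates \eqref{f1}--\eqref{f2} apply, so $(s,x',x)\mapsto\partial_x^np_s^0(x,x')$ exists, is continuous in $(x,x')$, and satisfies $|\partial_x^np_1^0(x,x'')|\le Ce^{-c|x-x''|^2}$ for $0\le n\le2$, while $|p_t^1(x,x'')|\le C(t-1)^{-d/2}e^{-c|x-x''|^2/(t-1)}$ for $t>1$. Since every $\phi\in\mathcal S$ obeys $|\phi|\le CW$ and $W$ is Gaussian-integrable, the computations are exactly those of Lemmas \ref{fried} and \ref{eild} with the polynomial weight replaced by the general $W$: Chapman--Kolmogorov, Leibniz and Jensen give $\int_{\RR^d}|\phi(x'')\partial_x^np_s^0(x,x'')|^{1+\delta}dx''<\infty$ for some small $\delta>0$ (this is Assumption \ref{as4}), and $\int_{\RR^d}g(x'')|\partial_x^np_1^0(x,x'')|dx''\le h(x)$ with $g(x)=C\int_{\RR^d}W(x-x'')q(dx'')$ (this is Assumption \ref{as2}, with $h$ as above).

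The core is Assumption \ref{as3}: one must show $\sup_{\phi\in\mathcal S}\big|\int_{\RR^d}\phi(x')(p_{1+s}^1(x,dx')-q(dx'))\big|\le g(x)Ce^{-cs}$, where now $p_{1+s}^1$ is the transition density of $X$, whereas Proposition \ref{WTV_for good} supplies the analogous bound only for the density of the autonomous process $Z$. To bridge this I would split, for $\phi\in\mathcal S$,
\[
\EE[\phi(X_{1+s}^{1,x})]-\int_{\RR^d}\phi\,dq=\Big(\EE[\phi(X_{1+s}^{1,x})]-\EE[\phi(Z_{1+s}^{1,x})]\Big)+\Big(\EE[\phi(Z_{1+s}^{1,x})]-\int_{\RR^d}\phi\,dq\Big),
\]
the second term being exactly what Proposition \ref{WTV_for good} bounds by $Ce^{-cs}\int_{\RR^d}(1+W(x-x''))q(dx'')$. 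For the first term I would use a Duhamel representation for the evolution families of $X$ and $Z$ on $[1,1+s]$, using that by Lemma \ref{WTVmoments} a perturbation $L_t-L_\infty$ injected at time $t$ has its effect on $\phi$ damped at the uniform rate $M_1$ (since $|P^Z_{t,1+s}\phi(y)|\le CW(y)e^{-M_1(1+s-t)}$), together with Assumption \ref{coeff} to make $|b(t,\cdot)-b_\infty|$ and $|\sigma(t,\cdot)-\sigma_\infty|$ small. A cleaner alternative, which I would favour, is to re-run Steps 1--4 of the proof of Proposition \ref{WTV_for good} directly on the non-autonomous evolution family of $X$: the drift inequality \eqref{prop1} holds for every $t\ge0$ with the same $M_1$ because \eqref{L} of Assumption \ref{asex3cc} is uniform in time, and the minorization \eqref{prop2} follows from a parabolic Harnack inequality, uniformly for large $t$ since there the coefficients are close to the (uniformly elliptic) limiting ones.

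I expect this transfer to be the main obstacle if one insists on the exponential rate $Ce^{-cs}$: Assumption \ref{coeff} gives only pointwise convergence of the coefficients, so a bare Duhamel estimate produces a modulus $G(s)\to0$ but a priori not $G(s)=Ce^{-cs}$, and retaining the exponential rate forces one either to assume a quantitative (say exponential) rate of coefficient convergence, or to carry the Harris argument on $X$'s evolution family and check that the minorization constant does not deteriorate as $t\to\infty$ --- which is where the uniformly elliptic limiting operator and the uniform-in-time Lyapunov bound of Lemma \ref{WTVmoments} are essential. Once Assumptions \ref{as4}, \ref{as3} and \ref{as2} are in place, Theorem \ref{main} with $G(s)=Ce^{-cs}$ and the above $h$ yields $|\partial_x^nV(s,x)|\le Ce^{-cs}\int_{\RR^d}\int_{\RR^d}W(x''-x')q(dx')e^{-c|x-x''|^2}dx''$ for $n=1,2$ and $(s,x)\in(1,\infty)\times\RR^d$, which is the claim.
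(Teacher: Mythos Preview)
Your overall architecture matches the paper's: verify Assumptions \ref{as4}, \ref{as3}, \ref{as2} for the non-autonomous process $X$ with test class $\mathcal S$ of \eqref{SLL}, then invoke Theorem \ref{main}. Your treatment of Assumptions \ref{as4} and \ref{as2} via the Friedman/Eidelman Gaussian bounds is exactly what the paper does (referring back to the computations of Lemmas \ref{fried} and \ref{eild} with $W$ in place of the polynomial weight). The splitting
\[
\EE[\phi(X_{1+s}^{1,x})]-\int\phi\,dq=\big(\EE[\phi(X_{1+s}^{1,x})]-\EE[\phi(Z_{1+s}^{1,x})]\big)+\big(\EE[\phi(Z_{1+s}^{1,x})]-\int\phi\,dq\big)
\]
is also the paper's, and the second term is handled by Proposition \ref{WTV_for good} in both.

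Where you diverge is the first term, and you make it much harder than necessary. You propose either a Duhamel comparison $L_t-L_\infty$ or re-running the Harris machinery on the non-autonomous evolution family of $X$, and you correctly flag that Assumption \ref{coeff} gives only pointwise convergence of the coefficients, so an exponential rate is not automatic by those routes. The paper bypasses this entirely with a crude but decisive observation: since every $\phi\in\mathcal S$ satisfies $|\phi|\le CW$, one simply bounds
\[
\int_{\RR^d}W(x')\,|P_{1+s}^1(x,dx')-p_{1+s}^1(x,dx')|\le \EE[W(X_{1+s}^{1,x})]+\EE[W(Z_{1+s}^{1,x})]\le 2e^{-M_1 s}W(x),
\]
using nothing more than Lemma \ref{WTVmoments}, which applies to \emph{both} $X$ and $Z$ because the Lyapunov inequality \eqref{L} in Assumption \ref{asex3cc} is assumed uniformly in $t\ge0$ (and hence also for the limiting operator $L_\infty$). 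No comparison of the two dynamics is needed at all; each $W$-moment decays separately at rate $M_1$, so their sum does too. This yields Assumption \ref{as3} with $g(x)=W(x)+\int W(x-x'')q(dx'')$ and $G(s)=Ce^{-cs}$, and the extra $W(x'')$ contribution is absorbed into $h$ by the Gaussian-integrability hypothesis on $W$.

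In short: your plan is sound, but the ``main obstacle'' you identify is an artefact of the Duhamel/Harris route; the paper's triangle-inequality-plus-Lyapunov argument removes it in one line and needs nothing from Assumption \ref{coeff} beyond the existence of $b_\infty,\sigma_\infty$ so that $Z$ is well defined.
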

\begin{proof}
Recall that for any $x\in \RR^d$ the density of $(Z_{1+s}^{1,x})_{s\ge0}$ (solution to \eqref{limiting}) is denoted by $x'\mapsto p_{1+s}^1(x,x')$ and it has and invariant measure by $q$ by Proposition \ref{WTV_for good}. Let the density of $(X_{1+s}^{1,x})_{s\ge0}$ (solution to \eqref{auxintuition}) be denoted by $x'\mapsto P_{1+s}^1(x,x')$. 

Then, first by the Triangle inequality and afterwards by Lemma \ref{WTVmoments}, for any $x\in\RR^d,s\ge 0$
\begin{equation}\label{thisneedslabel}
\begin{split}
\int_{\RR^d}W(x')|P_{1+s}^1(x,dx')-p_{1+s}^1(x,dx')|&\le \int_{\RR^d}W(x') P_{1+s}^1(x,x')dx''+\int_{\RR^d}W(x') p_{1+s}^1(x,x')dx'\\
&=\EE[W(X_{1+s}^{1,x})]+\EE[W(Z_{1+s}^{1,x})]\\
&\le 2e^{-M_1 s}W(x).
\end{split}
\end{equation}
By the Triangle inequality, the fact that any $\phi\in\mathcal{S}$ satisfies for some $C>0$ that $\phi\le C(1+W)\le CW$, together with \eqref{thisneedslabel} and Proposition \ref{WTV_for good} , $\forall \,\, (s,x)\in[0,\infty)\times \RR^d$,
\begin{align*}
\underset{\phi\in \mathcal{S}}{\sup}&\bigg|\int_{\RR^{d}}\phi(x')(P_{1+s}^1(x,dx')-q(dx'))\bigg|\\
&\le\underset{\phi\in \mathcal{S}}{\sup}\bigg|\int_{\RR^{d}}\phi(x')(P_{1+s}^1(x,dx')-p_{1+s}^1(x,dx'))\bigg|+\underset{\phi\in \mathcal{S}}{\sup}\bigg|\int_{\RR^{d}}\phi(x')(p_{1+s}^1(x,dx')-q(dx')))\bigg|\\
&\le\int_{\RR^{d}}CW(x')|P_{1+s}^1(x,dx')-p_{1+s}^1(x,dx')|+\underset{\phi\in \mathcal{S}}{\sup}\bigg|\int_{\RR^{d}}\phi(x')(p_{1+s}^1(x,dx')-q(dx')))\bigg| \\
&\le  Ce^{-M_1 s}W(x)+Ce^{-c s}\int_{\RR^{d}} W(x-x'')q(dx'').
\end{align*}
In other words, Assumption \ref{as3} is satisfied  for some $C, c>0$ with $g(x)=W(x)+\int_{\RR^d}W(x-x'')q(dx'') $ and $G(s)=Ce^{-cs}$. 

Moreover, similarly to Lemma \ref{fried}, under Assumptions \ref{asex20} and \ref{asex2a} one asserts that Assumption \ref{as2} is satisfied with \\
$h(x)= C\int_{\RR^d}\int_{\RR^d}(W(x'')+W(x''-x'))q(dx')e^{-c|x-x''|^{2}}dx''$ since we assumed that $W$ is integrable against the specific Gaussians in the bounds \eqref{f1} and \eqref{f2}.
Alternatively, under Assumptions \ref{asex20} and  \ref{asex2b}(b), similarly to  Lemma \ref{eild} we can prove that  Assumption \ref{as2} is satisfied with $h(x)=  Ce^{(\nu+c)|x|^2}$.

Moreover, Assumption \ref{as4} is proved under both  Assumptions \ref{asex2a}(a) and  \ref{asex2a}(b) by similar arguments to those in Lemmas \ref{fried} and \ref{eild} respectively.

Finally, since Assumptions  \ref{as4}, \ref{as3} and \ref{as2} are satisfied, Theorem \ref{main} gives us the claim.
\end{proof}

\section{Connection to McKean-Vlasov SDEs}\label{mc}\label{ap_mc_sec}
Consider now the following real--valued, $d$-dimensional stochastic process $(X_{t})_{t\ge 0}$ satisfying a so called McKean-Vlasov SDE. That is, given coefficients $\beta:\RR^d\times \mathcal{P}_2(\RR^d)  \to \RR^d $ and $\Sigma:\RR^d\times \mathcal{P}_2(\RR^d) \to \RR^d \times \RR^d$ and any initial datum $\xi$ with $\mathcal{L}{(\xi)}\in \mathcal{P}_2(\RR^d)$,  our object of study is the stochastic process $X_s^{\xi}$ assumed to be the unique (in the sense of probability law) weak solution of the following SDE:
\begin{equation}\label{McKV1}
	dX_s^{\xi}=\beta(X_s^{\xi}, \mathcal{L}(X_s^{\xi}))ds +\Sigma(X_s^{\xi},\mathcal{L}(X_s^{\xi}))\,dB_s,\quad \forall s \in(0,\infty); \quad X_0^{\xi}=\xi.
\end{equation}

As usually, the McKean dynamics complicates the analysis. However, due to uniqueness, we can overcome these difficulties by the introduction of an auxiliary process. Assume  that indeed \eqref{McKV1} has a unique weak solution $\left(X^{\xi}_s\right)_{s\in [0,\infty)}$. We can now define the functions $b(s,x;\xi):=\beta(x,\mathcal{L}(X_s^{\xi}))$ and $\sigma(s,x;\xi):=\Sigma(x,\mathcal{L}(X_s^{\xi}))$ by just plugging in the law of the solution. Then, from \eqref{McKV1} we obtain the following related SDE:
\begin{equation}\label{original0}
	dY_s^{\tau,y;\xi}=b(s,Y_s^{\tau,y;\xi};\xi)ds+\sigma(s,Y_s^{\tau,y;\xi};\xi)dB_s;\, Y_{\tau}^{\tau,y;\xi}=y.
\end{equation}
Due to uniqueness and under monotonicity  conditions, one can prove that  for each $x\in \RR^d$,   $X_s^{x}=Y_s^{0,x;x}$ a.e, and so in particular  the functions $\V(s,x):=\EE[\phi(X_s^{0;x})]$ and $V(s,y;x):=\EE[\phi(Y_s^{0,y;x})]$, defined for $\phi$ a measurable function, satisfy $\V(s,x)=V(s,y;x)_{|_{y=x}}$ for all $x\in \RR^d$. Consequently, bounds on the derivatives of $\V$ can be obtained by exploiting uniquely the dynamic of $Y$, which  is that of a usual non-autonomous SDE, in the sense that $\partial_x\V(s,x)=\partial_{y}V(s,y;x)_{|_{y=x}}+\partial_{x}V(s,y;x)_{|_{y=x}}$. The reason why our main theorem can be applied to McKean--Vlasov SDEs is due to the fact that we can create this auxiliary non--autonomous process, where $b,\sigma$ preserve certain regularity and growth conditions. We can prove that for this type of processes we obtain either uniform estimates or decay in time for the derivatives $[1,\infty)\times \RR^d \ni(s,x)\mapsto \partial_{y}^nV(s,y;x)_{|_{y=x}}$ of orders $n$ dictated by an assumption on the regularity and integrability of the density of $Y_1^{0,x;x}$. Again, uniform estimates can be concluded when the transition densities of the process $Y$ remain ``close'' to some fixed measure $q\in \mathcal{P}_2(\RR^d)$ and only when we succeed on proving decay towards such a measure  we achieve decay of the derivatives as well.

Going back to the process solving equation \eqref{McKV1}, it arises naturally as limit of a weakly interacting particle system (we denote it's elements by $Z^{i,N}$, where $N$ is the number of particles in the system). It turns out that a particular limiting behaviour of this system is that as $N$ is allowed to go to infinity, any finite subset of particles become asymptotically independent of each other and in fact they converge weakly to i.i.d copies of the original McKean--Vlasov process  $X_s^{\xi}$. 
Now, there is a version of this phenomenon known by the name of weak propagation of chaos, which deals with the statistical behaviour of the empirical distribution of the particle system and whose explicit bounds are concerned with estimates of expressions of the form $|\EE[\phi(X_T)]-\EE[\phi(Z_T^{1,N})]|$ for some measurable test function $\phi\in B(\RR^d)$ and some fixed time $T>0.$ Recent work presented in \cite{chassagneux2019weak} and \cite{bencheikh2019weak} asserts order 
$\mathcal{O}\left(\frac{1}{N}\right)$ for this kind of expressions and order 
$\mathcal{O}\left(\frac{1}{N}+h\right)$ for the step--$h$ Euler discretization scheme. Now, uniform estimates in time are harder to come by and have only recently been proved for the torus in
\cite{delarue2021uniform} when the diffusion is constant and the drift belongs to some specific family such as the ones with small dependence on the measure. 
As opposed to the Master Equation used in \cite{delarue2021uniform}, for some  cases it is enough to use regular PDE techniques for obtaining uniform estimates for the weak propagation of chaos. They are based on the uniform decay of the first and second space derivatives of the real solution (in variables $(s,y)$) to the backward Kolmogorov equation, i.e  $V: [0,\infty) \times \RR^d\times \RR^d\ni (s,y,x) \mapsto \EE[\phi(Y_s^{0,y;x})]$, that can be obtained using our approach. 


\section*{Acknowledgments}
Maria Lefter was supported by The
Maxwell Institute Graduate School in Analysis and its Applications, a Centre for Doctoral Training funded by the
UK Engineering and Physical Sciences Research Council (grant EP/L016508/01), the Scottish Funding Council,
Heriot-Watt University, and the University of Edinburgh.

\begin{appendix}
\section{Chapman--Kolmogorov identity for non-homogeneous processes}\label{CK}
Consider a stochastic process defined for $s\ge 0$  by:
\begin{equation}\label{processA}
X_{\tau+s}^{\tau,x}=x+\int_{\tau}^{\tau+s}b(r,X_r^{\tau,x} )dr +\int_{\tau}^{\tau+s}\sigma(r,X_r^{\tau,x} )dB_r,
\end{equation}
and which has a density $[\tau,\infty)\times \RR^d \ni (\tau+s,x')\to p_{\tau+s}^{\tau}(x,x')$. Then, for any function $\phi\in C^0(\RR^d)$  with compact support, we have that 
$$\EE[\phi(X_{\tau+s}^{\tau,x})]=\int_{\RR^d}\phi(x')p_{\tau+s}^{\tau}(x,x')dx'.$$

\begin{lemma}\label{ckappendix1}
Given $(\tau,x)\in [0,\infty)\times \RR^d$, assume that the process $X=(X_{\tau+s}^{\tau,x})_{s\ge0}$ defined by \eqref{processA} satisfies the flow property. Then, its density $[\tau,\infty)\times \RR^d \ni (\tau+s,x')\to p_{\tau+s}^{\tau}(x,x')$ satisfies the following for any $\tau\le 1$, $s\ge 0$ and $x,x''\in \RR^d$:
\begin{equation}\label{ckappendix}p_{\tau+s}^{\tau}(x,x'')=\int p_{\tau+s}^{1}(x',x'')p_{1}^{\tau}(x,x')dx'.
\end{equation}
\end{lemma}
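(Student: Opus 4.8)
The plan is to test the claimed identity \eqref{ckappendix} against continuous, compactly supported functions and exploit the flow property in the guise of the Markov property. Fix $\tau\le 1$, $s\ge 0$ and $x\in\RR^d$ (the flow property is applied at the intermediate time $1$), and let $\phi\in C^0(\RR^d)$ have compact support. By the observation recorded just before the lemma, $\EE[\phi(X_{\tau+s}^{\tau,x})]=\int_{\RR^d}\phi(x'')p_{\tau+s}^{\tau}(x,x'')\,dx''$; likewise, for each fixed $y\in\RR^d$, the function $g(y):=\EE[\phi(X_{\tau+s}^{1,y})]=\int_{\RR^d}\phi(x'')p_{\tau+s}^{1}(y,x'')\,dx''$ is well defined, bounded by $\|\phi\|_\infty$, and measurable in $y$.

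First I would invoke the flow property to write $X_{\tau+s}^{\tau,x}=X_{\tau+s}^{1,X_1^{\tau,x}}$ $\mathbb{P}$-a.s.\ and then condition on $\mathcal{F}_1$. Because the solution restarted at time $1$ is independent of $\mathcal{F}_1$, the Markov property yields $\EE[\phi(X_{\tau+s}^{\tau,x})\mid\mathcal{F}_1]=g(X_1^{\tau,x})$ a.s. Taking expectations and using that $X_1^{\tau,x}$ has density $p_1^{\tau}(x,\cdot)$ gives
\[
\int_{\RR^d}\phi(x'')p_{\tau+s}^{\tau}(x,x'')\,dx''=\EE[g(X_1^{\tau,x})]=\int_{\RR^d}g(x')p_1^{\tau}(x,x')\,dx'=\int_{\RR^d}\int_{\RR^d}\phi(x'')p_{\tau+s}^{1}(x',x'')p_1^{\tau}(x,x')\,dx''\,dx'.
\]
The double integral is absolutely convergent, since $\phi$ is bounded with compact support and the two transition densities are nonnegative with total mass one, so Fubini's theorem applies and lets me swap the order of integration:
\[
\int_{\RR^d}\phi(x'')p_{\tau+s}^{\tau}(x,x'')\,dx''=\int_{\RR^d}\phi(x'')\Big(\int_{\RR^d}p_{\tau+s}^{1}(x',x'')p_1^{\tau}(x,x')\,dx'\Big)\,dx''.
\]
As this holds for every compactly supported $\phi\in C^0(\RR^d)$, the two functions of $x''$ appearing inside the outer integrals agree for Lebesgue-almost every $x''$, and hence for every $x''\in\RR^d$ once one knows (e.g.\ from the continuity of the transition densities available in the setting of Theorem \ref{main}) that both sides are continuous in $x''$; this is exactly \eqref{ckappendix}.

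The only genuinely delicate point is the collapse of $\EE[\phi(X_{\tau+s}^{\tau,x})\mid\mathcal{F}_1]$ to $g(X_1^{\tau,x})$, i.e.\ the Markov property. I would justify it from weak uniqueness for \eqref{processA} (Assumption \ref{as4}): conditionally on $\mathcal{F}_1$, the process $(X_{1+r}^{\tau,x})_{r\ge0}$ solves the same SDE started from $X_1^{\tau,x}$ at time $1$ and driven by the increments $(B_{1+r}-B_1)_{r\ge0}$, which form a Brownian motion independent of $\mathcal{F}_1$; by uniqueness in law its conditional law is that of $X^{1,y}$ evaluated at $y=X_1^{\tau,x}$, which is precisely what ``the process satisfies the flow property'' is taken to encode here. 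Everything else is bookkeeping with Fubini and the density representation of expectations.
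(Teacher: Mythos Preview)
Your proof is correct and follows essentially the same route as the paper: test against compactly supported continuous $\phi$, invoke the flow/Markov property at the intermediate time $1$, rewrite both sides via the transition densities, and conclude by arbitrariness of $\phi$. You are in fact a bit more careful than the paper in justifying Fubini and in flagging the a.e.\ versus everywhere issue at the end, but the structure of the argument is the same.
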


\begin{proof}
For $\tau\le 1$ recall that we assumed   for any $\tau, s, x,$
$\displaystyle{X_{\tau+s}^{\tau,x}=X_{\tau+s}^{{1+\tau},X_{1+\tau}^{\tau,x}}}.$

First by the tower property and afterwards by the  flow property above, for any  $\phi\in C^0(\RR^d)$ with compact support we have:
\begin{align*}
\int_{\RR^d}\phi(x')p_{\tau+s}^{\tau}(x,x')dx'&=\EE[\phi(X_{\tau+s}^{\tau,x})]=\EE\big[\EE[\phi(X_{\tau+s}^{{1},X_{1}^{\tau,x}})]|X_{1}^{\tau,x}\big]=\EE\big[\EE[\phi(X_{\tau+s}^{{1},X_{1}^{\tau,x}})]|X_{1}^{\tau,x}\big]\\
&=\int_{\RR^d}\EE[\phi(X_{\tau+s}^{{1},x''})]p_{1}^{\tau}(x,x'')dx''=\int_{\RR^d}\int_{\RR^d}\phi(x')p_{\tau+s}^{{1}}(x'',x')p_{1}^{\tau}(x,x'')dx'dx''\\
&=\int_{\RR^d}\phi(x')\bigg(\int_{\RR^d} p_{\tau+s}^{{1}}(x'',x')p_{1}^{\tau}(x,x'')dx''\bigg)dx'.
\end{align*}	
Finally, since the above holds for an arbitrary test function $\phi,$ we conclude \eqref{ckappendix}.
\end{proof}	
\end{appendix}

\bibliographystyle{ieeetr}
\bibliography{Non-autonomous-SDEs.bib}	

\end{document}